\documentclass{amsart}
\usepackage{mathtools}
\usepackage{amsmath,amssymb,amsthm,mathrsfs,array,color}
\usepackage{verbatim}
\usepackage{url}

\newcommand{\re}{\mathbb{R}}
\newcommand{\mR}{\mathbb{R}}

\newcommand{\N}{\mathbb{N}}

\newcommand{\half}{\frac{1}{2}}
\newcommand{\lmd}{\lambda}

\newcommand{\nn}{\nonumber}
\newcommand{\eps}{\epsilon}

\newcommand{\dt}{\delta}

\def\af{\alpha}

\newcommand{\sig}{\sigma}

\newcommand{\reff}[1]{(\ref{#1})}

\newcommand{\mc}[1]{\mathcal{#1}}
\newcommand{\mt}[1]{\mathtt{#1}}

\newcommand{\supp}[1]{\mbox{supp}(#1)}

\renewcommand{\vec}[1]{\mathit{vec}(#1)}
\newcommand{\qmod}[1]{\mbox{QM}[#1]}
\newcommand{\ideal}[1]{\mbox{Ideal}[#1]}
\newcommand{\st}{\mathit{s.t.}}
\newcommand{\hm}{\mathit{hom}}

\newcommand{\den}{\mathit{den}}

\newcommand{\bdes}{\begin{description}}
	\newcommand{\edes}{\end{description}}
\newcommand{\bal}{\begin{align}}
	\newcommand{\eal}{\end{align}}
\newcommand{\bnum}{\begin{enumerate}}
	\newcommand{\enum}{\end{enumerate}}
\newcommand{\bit}{\begin{itemize}}
	\newcommand{\eit}{\end{itemize}}
\newcommand{\bea}{\begin{eqnarray}}
	\newcommand{\eea}{\end{eqnarray}}
\newcommand{\be}{\begin{equation}}
	\newcommand{\ee}{\end{equation}}
\newcommand{\baray}{\begin{array}}
	\newcommand{\earay}{\end{array}}
\newcommand{\bsry}{\begin{subarray}}
	\newcommand{\esry}{\end{subarray}}
\newcommand{\bca}{\begin{cases}}
	\newcommand{\eca}{\end{cases}}
\newcommand{\bcen}{\begin{center}}
	\newcommand{\ecen}{\end{center}}
\newcommand{\bbm}{\begin{bmatrix}}
	\newcommand{\ebm}{\end{bmatrix}}
\newcommand{\bmx}{\begin{matrix}}
	\newcommand{\emx}{\end{matrix}}
\newcommand{\bpm}{\begin{pmatrix}}
	\newcommand{\epm}{\end{pmatrix}}
\newcommand{\btab}{\begin{tabular}}
	\newcommand{\etab}{\end{tabular}}

\theoremstyle{plain}
\newtheorem{theorem}{Theorem}[section]

\newtheorem{prop}[theorem]{Proposition}

\newtheorem*{claim*}{Claim}
\newtheorem{thm}[theorem]{Theorem}
\theoremstyle{definition}
\newtheorem{example}[theorem]{Example}

\newtheorem{assumption}[theorem]{Assumption}
\newtheorem{remark}[theorem]{Remark}

\newtheorem{conj}[theorem]{Conjecture}

\setcounter{equation}{0}
\setcounter{subsection}{0}
\numberwithin{equation}{section}
\numberwithin{table}{section}

\def\rn{{\mathbb{R}^n}}
\def\r{{\mathbb{R}}}

\def\n{{\mathbb{N}}}

\def\n{{\mathbb{N}}}

\begin{document}

\title[Finite convergence of Moment-SOS relaxations]
{Finite convergence of Moment-SOS relaxations with non-real radical ideals}

\author[Lei Huang]{Lei Huang}
\address{Lei Huang, Department of Mathematics,
	University of California San Diego,
	9500 Gilman Drive, La Jolla, CA, USA, 92093.}
\email{leh010@ucsd.edu}

\author[Jiawang Nie]{Jiawang~Nie}
\address{Jiawang Nie,  Department of Mathematics,
University of California San Diego,
9500 Gilman Drive, La Jolla, CA, USA, 92093.}
\email{njw@math.ucsd.edu}

\author[Ya-Xiang Yuan]{Ya-Xiang Yuan}
\address{Ya-Xiang Yuan,
 Institute of Computational Mathematics and Scientific/Engineering Computing,
 Academy of Mathematics and Systems Science,
 Chinese Academy of Sciences, Beijing, China, 100049.}
\email{yyx@lsec.cc.ac.cn}

\subjclass[2020]{90C23,65K05,90C22}

\keywords{Moment-SOS relaxations, finite convergence,
nonnegative polynomial, real radical, optimality condition}

\begin{abstract}
We consider the linear conic optimization problem with the cone of nonnegative polynomials.
Its dual optimization problem is the generalized moment problem.
Moment-SOS relaxations are powerful for solving them.
This paper studies finite convergence of the Moment-SOS hierarchy
when the constraining set is defined by equations
whose ideal may not be real radical.
Under the archimedeanness, we show that the Moment-SOS hierarchy
has finite convergence if some classical optimality conditions hold
at every minimizer of the optimal nonnegative polynomial
for the linear conic optimization problem.
When the archimedeanness fails (this is the case for unbounded sets),
we propose a homogenized Moment-SOS hierarchy
and prove its finite convergence under similar assumptions.
Furthermore, we also prove the finite convergence of
the Moment-SOS hierarchy with denominators.
In particular, this paper resolves a conjecture posed in the earlier work.
\end{abstract}

\maketitle

\section{Introduction}

Let $\mathbb{R}[x] \coloneqq \mathbb{R}[x_1,\dots,x_n]$
denote the ring of polynomials in
$x:=(x_1,\dots,x_n)$ with real coefficients,
and let $\mathbb{R}[x]_d$ be the set of polynomials in
$\re[x]$ with degrees at most $d$.
For a nonnegative integer vector power
$\alpha \coloneqq (\alpha_{1},\dots,\alpha_{n})$,
the $\af$th moment of a Borel measure $\mu$ on $\re^n$ is the integral
$\int  x^\af\mt{d} \mu$ for the monomial
$x^\af \coloneqq x_1^{\alpha_{1}} \cdots x_n^{\alpha_{n}}$.
The support of $\mu$, for which we denote $\supp{\mu}$,
is the smallest closed set $T \subseteq \re^n$
such that $\mu(\re^n \backslash T)=0$.
For the degree $d>0$, denote the power set
($|\alpha| \coloneqq \af_1 + \cdots + \af_n$)
\[
\mathbb{N}_{d}^{n}  \coloneqq  \left\{\alpha \in \mathbb{N}^{n} :
|\alpha|   \le  d \right\}.
\]
Let $\re^{ \N^n_{d} }$ denote the space of real vectors $y$
that are labeled by $\af \in \N^n_{d}$, i.e.,
\[
y \, = \, (y_\af)_{ \af \in \N^n_{d} }.
\]
Such a vector $y$ is called a
{\it truncated multi-sequence} (tms) of degree $d$.
For a closed set $K\subseteq \mR^n$,
let $\mathscr{R}_{d}(K)$ denote the moment cone
\be \nonumber
\mathscr{R}_d(K) \, \, \coloneqq \, \,
\Bigg \{  y  \in \re^{ \N^n_{d} }
\left| \baray{c}
\exists \,\, \text{a Borel measure} \, \, \mu, \,\, \supp{\mu} \subseteq  K, \\
y_{\alpha}=\int x^{\alpha} \mt{~d}\mu \, \text{~for each~} \,  \af \in \N^n_{d}
\earay \right.
\Bigg \}.
\ee
The moment cone $\mathscr{R}_d(K)$ is always convex.
It is closed if $K$ is compact,
but it is typically not closed if $K$ is noncompact.
We define the bilinear operation $\langle \cdot, \cdot \rangle$
between $\re[x]_d$ and $\re^{ \N^n_{d} }$:
\be \nonumber
\langle p, y \rangle \,= \, \sum_{ \af \in \N^n_{d} } p_\af y_\af
\quad \mbox{for} \quad
p  = \sum_{ \af \in \N^n_{d} } p_\af x^\af .
\ee

We study the  generalized moment problem (GMP),
which is also called the generalized problem of moments
in some literature (see \cite{Las08})
\be  \label{gpm}
\left\{ \baray{rl}
\min & \langle f, y\rangle  \\
\st & \langle a_i,y\rangle   =  b_i \, \, ( 1 \le i \le m_1),\\
&\langle a_i,y\rangle   \geq  b_i \, \, ( m_1 < i \le m), \\
&   ~y \in \mathscr{R}_{d}(K) .
\earay \right.
\ee
In the above, $f$, $a_1, \ldots, a_m$ are given polynomials in
$\mR[x]_d$, $m_1 \le m$ are integers
and $b_1,\ldots, b_m \in \re$ are given constants.
For convenience, we denote the vector
$b = (b_1, \dots, b_m)$.
The GMP has broad applications in
optimal transport \cite{mona},
distributionally robust optimization \cite{nieyz,NieZhong23},
computational geometry and optimal control \cite{HDLJ},
different ranks for matrices and tensors \cite{DNY22,gls,klm},
and tensor optimization \cite{NZ16,STNN17,NieZhang18}.
More applications can be found in
\cite{HDLJ,Las08,Lau09,niebook}.

The $d$th degree cone of polynomials that are nonnegative on $K$ is
\[
\mathscr{P}_{d}(K) \, \coloneqq \, \{p\in \mR[x]_d:
p(x) \ge 0 \quad \mbox{for all} \,\, x \in K \}.
\]
Note that $\mathscr{P}_{d}(K)$ is a closed convex cone of $\re[x]_d$.
The nonnegative polynomial cone $\mathscr{P}_d(K)$
and the moment cone $\mathscr{R}_d(K)$ are dual to each other,
under the above bilinear operation $\langle \cdot, \cdot \rangle$, i.e.,
\[
\langle p, y \rangle \ge 0 \quad
\text{for all} \,\, p \in \mathscr{P}_d(K)
\,\, \text{and for all} \, \, y \in \mathscr{R}_d(K) .
\]
It can be shown that the dual optimization of \reff{gpm} is
the following linear conic optimization problem:
\be  \label{gpm:dual}
\left\{ \baray{rl}
\max &  b_1\theta_1 + \cdots + b_m \theta_m  \\
\st &  f - \sum\limits_{i=1}^m \theta_ia_i
\in \mathscr{P}_{d}(K),\\
&  \theta_i \geq 0 \,\, (m_1 < i \le m), \\
&  \theta = (\theta_1, \ldots, \theta_m) \in \re^m.
\earay \right.
\ee
We refer to \cite{Las08,niebook}
for more details about the dual relationship.

Moment-SOS relaxations are introduced by Lasserre \cite{Las08}
for solving the pair \reff{gpm}-\reff{gpm:dual}.
Under the archimedeanness, the Moment-SOS hierarchy
has asymptotic convergence. When the set $K$ is a simplex or sphere,
the convergence rates of the  Moment-SOS hierarchy are studied in \cite{kfe}.
We also refer to the survey by De Klerk and Laurent \cite{klerk2019survey}
for the convergence rate analysis.

The classical polynomial optimization problem  (see \cite{Las01})
can be viewed as a special case of the GMP.
There are interesting results about the finite convergence
of the  Moment-SOS hierarchy for solving polynomial optimization.
Under the archimedeanness,  Nie \cite{nieopcd} proved that
the Moment-SOS hierarchy has finite convergence
if some classical optimality conditions
hold at every minimizer
(throughout this paper, a minimizer means a global minimizer,
unless its meaning is otherwise specified).
For convex polynomial optimization, the Moment-SOS hierarchy
has finite convergence under the strict convexity
or SOS-convexity conditions \cite{dKlLau11,Las09}.
When the equality constraints give a finite set,
the Moment-SOS hierarchy also has finite convergence,
as shown in \cite{LLR08,Lau07,Nie13}.
%
%
For  asymptotic convergence rates of the Moment-SOS hierarchy
for solving polynomial optimization, we refer to \cite{bmmp,FF20,maim,niesch,slot1}.

\subsection*{Contributions}

Despite rich work for polynomial optimization,
there are relatively less results about finite convergence
of the Moment-SOS hierarchy for solving generalized moment problems.
A partial reason for this is that the classical optimality conditions
for nonlinear programming \cite{Bert97}
do not generalize conveniently for \reff{gpm}-\reff{gpm:dual}.
Consequently, the finite convergence theory in \cite{nieopcd}
does not generalize directly.

Assume that $K$ is the semialgebraic set given as
\[ 
K \coloneqq
\left\{x \in \mathbb{R}^{n} \left| \begin{array}{l}
c_{j}(x)=0~(j \in \mathcal{E}), \\
c_{j}(x) \geq 0~(j \in \mathcal{I})
\end{array}\right\},\right.
\]
where $ c_j$ $(j\in \mc{E} \cup \mc{I})$ are given polynomials.
The $\mc{E}$ and $\mc{I}$ are disjoint labeling sets
for equality and inequality constraining polynomials respectively.
Suppose $\theta^* = (\theta^*_1, \ldots, \theta^*_m)$
is a maximizer of \reff{gpm:dual}.
Consider the polynomial optimization problem
\be  \label{1:stand:opt}
\left\{ \baray{rl}
\min & f_{\theta^*}(x) \, \coloneqq \,
 f(x) - \sum\limits_{i=1}^m \theta_i^* a_i(x)  \\
\st &  	c_{j}(x)=0~(j \in \mathcal{E}), \\
&c_{j}(x) \geq 0~(j \in \mathcal{I}).
\earay \right.
\ee
It is worthy to note that the polynomial $f_{\theta^*}(x)$ is nonnegative on $K$ and it also typically has a zero $u\in K$.
For convenience, we denote the polynomial tuples
\[
c_{eq}=(c_j(x))_{j\in \mathcal{E}}, ~
c_{in}=(c_j(x))_{j\in \mathcal{I}}.
\]
When $\ideal{c_{eq}}+\qmod{c_{in}}$ is archimedean
(see Section~\ref{ssc:pre:pop} for the notation),
if the linear independence constraint qualification,
strict complementarity and second order sufficient conditions
hold at each minimizer of \reff{1:stand:opt}, then there exists a polynomial
$\sig \in \qmod{c_{in}}$ such that
$f_{\theta^*} - \sig  \equiv 0$ on $V_{\re}(c_{eq})$.
If $\ideal{c_{eq}}$ is a real radical ideal
(see Section~\ref{ssc:pre:pop} for the definition),
then $f_{\theta^*}(x) - \sig \in \ideal{ c_{eq} }$,
or equivalently, $f_{\theta^*}(x) \in \ideal{c_{eq}}+\qmod{c_{in}}$.
However, if $\ideal{c_{eq}}$ is not real radical,
we typically do not have $f_{\theta^*}(x) \in \ideal{c_{eq}}+\qmod{c_{in}}$.
This is a major difficulty for studying finite convergence
of the Moment-SOS hierarchy when solving generalized moment problems.

We remark that the equality constraint $c_j(x)=0$ in \reff{1:stand:opt}
can be equivalently replaced by two inequalities
$c_j(x)\geq 0$, $-c_j(x)\geq0$. However, by doing this,
the linear independence constraint qualification
will fail for this reformulation and forbid us
to get the finite convergence results.

In this paper, we study the Moment-SOS hierarchy for solving \reff{gpm}-\reff{gpm:dual}
{\em without} the assumption that $\ideal{c_{eq}}$ is real radical.
Indeed, we can prove its finite convergence under the archimedeanness
and the above mentioned optimality conditions.
Our major contributions are as follows.

\bit

\item
First, we consider the case that $\ideal{c_{eq}}+\qmod{c_{in}}$ is archimedean
but $\ideal{c_{eq}}$ is not necessarily real radical.
We prove that the Moment-SOS hierarchy for solving
\reff{gpm}-\reff{gpm:dual} has finite convergence
if the linear independence constraint qualification,
strict complementarity and second order sufficient conditions hold
at every minimizer of \reff{1:stand:opt}. In particular,  we are also able to
show that every minimizer of the moment relaxation for \reff{gpm}
must satisfy the flat truncation, for all sufficient high relaxation orders.
This also gives a certificate for checking finite convergence
of the Moment-SOS hierarchy in computational practice. Applications in polynomial optimization and the semialgebraic super resolution problem are also discussed.

\item
Second, we consider the case that $\ideal{c_{eq}}+\qmod{c_{in}}$
is not archimedean. This is typically the case when $K$ is an unbounded set
and the Moment-SOS hierarchy generally does not converge.
For this case, we propose a homogenized Moment-SOS hierarchy
introduced in \cite{hny, hny1} and prove its finite convergence
under similar optimality conditions for the homogenization of
the optimization \reff{1:stand:opt}.
Consequently, we are also able to show the finite convergence of
the Moment-SOS hierarchy with denominators,
motivated by the Putinar-Vasilescu type Positivstellensatz
\cite{MLM21,putinar1999positive}.
In particular, our results resolve Conjecture~8.2 in the earlier work \cite{hny}.

\eit
	
This paper is organized as follows. Section~\ref{sc:pre}
reviews some backgrounds for polynomial optimization and moment problems.
In Section~\ref{sc:finite}, we prove the finite convergence of the Moment-SOS hierarchy
under the archimedeanness and  optimality conditions,
without the assumption of real radicalness. 
In Section~\ref{sc:homo}, we prove the finite convergence of
the homogenized Moment-SOS hierarchy and
the one with denominators. In Section~\ref{sc:su}, we study the semialgebraic super resolution problem.
We make some conclusions and pose a new conjecture
in Section~\ref{sc:con}.

\section{Preliminaries}
\label{sc:pre}

\subsection*{Notation}

The symbol $\mathbb{N}$ (resp., $\mathbb{R}$)
denotes the set of nonnegative integers (resp., real numbers).
For $x = (x_1,\dots,x_n)$ and $\af = (\af_1, \ldots, \af_n)$, denote
\[
x^{\alpha} \coloneqq x_1^{\alpha_{1}}\cdots x_n^{\alpha_{n}}, \quad
|\alpha| \coloneqq  \alpha_{1}+\cdots+\alpha_{n}.
\]
For $t \in \mathbb{R}$, $\lceil t \rceil$
denotes the smallest integer greater than or equal to $t$.
For a positive integer $m$, denote $[m] \coloneqq \{1,\dots,m\}$.
For a matrix $A$, $A^{\mathrm{T}}$ denotes its transpose.
A symmetric matrix $X \succeq 0$  if $X$ is positive semidefinite.
For a set $S \subseteq \rn$, $cl(S)$ and $int(S)$
denote its closure and interior  in the Euclidean topology.
Denote by $\deg(p)$  the total degree of  the polynomial $p$.
For a degree $d$, let $[x]_d$ denote the vector of all monomials in $x$
with degrees  $\leq d$, ordered in the graded alphabetical ordering, i.e.,
\[
[x]_d^{\mathrm{T}} \,=\, [1,  x_1, x_2, \ldots, x_1^2, x_1x_2, \ldots,
x_1^d, x_1^{d-1}x_2, \ldots, x_n^d ].
\]
For a polynomial $p$, $p^\hm$ denotes its homogeneous terms of
the highest degree and $\tilde{p}$ denotes its homogenization, i.e.,
$\tilde{p}(\tilde{x})=x_0^{\deg(p)} p(x/x_0)$
for $\tilde{x} \coloneqq (x_0,x_1,\dots,x_n)$. A homogeneous polynomial $p$
is said to be a form and $p$ is positive definite if $p(x)>0$ for all nonzero $x \in \re^n$.

\subsection{Optimality conditions}
\label{ssc:opcd}

We first review some classical optimality conditions in nonlinear programming \cite{Bert97}.
Consider the nonlinear optimization problem
\be   \label{intro:smo}
\left\{\baray{rl}
\min & f(x) \\
\st  & c_{j}(x)=0~(j \in \mathcal{E}), \\
& c_{j}(x) \geq 0~(j \in \mathcal{I}),
\earay \right.
\ee
where $f,~  c_j$ $(j \in \mathcal{E} \cup \mathcal{I})$ are second order continuous differentiable.
Suppose $x^*$ is a local minimizer of $(\ref{intro:smo})$.
Denote the label set of active constraints at $x^*$
\be \label{nota:J(x*)}
J(x^*) \coloneqq \left\{j \in \mathcal{E} \cup \mathcal{I} :  c_{j}(x^*)=0\right\}.
\ee
If the gradient vector set $\{ \nabla c_{j}(x^*)\}_{j \in J(x^*)}$
 is linearly independent, the linear independence constraint qualification condition (LICQC)
is said to hold at $x^*$. When the LICQC holds at $x^*$,
there exist Lagrange multipliers $\lambda_j$  $(j \in \mathcal{E} \cup \mathcal{I})$ such that
\be  \label{1.1:KKT}
\boxed{
\begin{array}{c}
\nabla f(x^*) = \sum\limits_{j \in \mathcal{E} \cup \mathcal{I}} \lambda_{j} \nabla c_{j}(x^*), \\
\lambda_{j} \geq 0, \, \lambda_{j} c_{j}(x^*)=0~~(j \in \mathcal{I})
\end{array}
}.
\ee
Moreover, if $\lambda_j + c_j(x^*) >0$ for every $j \in \mathcal{I}$,
then the strict complementarity condition (SCC) is said to hold at $x^*$.
For the above Lagrange multipliers, the Lagrange function is
\[
\mathscr{L}(x) \,  \coloneqq  \, f(x)-\sum_{j \in \mathcal{E} \cup \mathcal{I}} \lambda_{j} c_{j}(x) .
\]
Under the LICQC, the second order necessary condition (SONC)
must hold at $x^*$, i.e.,
the following holds ($\nabla^2$ denotes the Hessian operator)
\be \label{opcd:SONC}
v^{\mathrm{T}}\nabla^2 \big( \mathscr{L}(x^*) \big) v \geq 0
\ee
for every $v\in \mathbb{R}^n$ satisfying
\be  \label{tangent}
v^{\mathrm{T}}\nabla c_{j}(x^*)=0 ~\text{~for all~}~ j \in J(x^*).
\ee
The second order sufficient condition (SOSC) is said to hold at $x^*$ if
\be \label{opcd:SOSC}
v^{\mathrm{T}}\nabla^2 \big( \mathscr{L}(x^*) \big) v > 0,
\ee
for every nonzero $v$ satisfying \reff{tangent}.

\subsection{Some basics for polynomial optimization}
\label{ssc:pre:pop}

Here we review some basics for polynomial optimization and moment problems.
A subset $I  \subseteq \mathbb{R}[x]$ is called an ideal of $\re[x]$
if $I \cdot \mathbb{R}[x] \subseteq \mathbb{R}[x]$, $I+I \subseteq I$.
For a polynomial tuple $h  \coloneqq (h_1,\dots, h_t)$,
$\ideal{h}$ denotes the ideal generated by $h$, i.e.,
\begin{equation*}
\ideal{h} \,= \, h_1 \cdot \mathbb{R}[x]+\cdots+h_t \cdot \mathbb{R}[x].
\end{equation*}
For a degree $k$, the $k$th degree truncation of $\ideal{h}$ is
\begin{equation*}
\ideal{h}_{k} = h_1 \cdot \mathbb{R}[x]_{k-\deg(h_1)}+\cdots+
h_t \cdot \mathbb{R}[x]_{k-\deg(h_t)}.
\end{equation*}
Its  real variety is
\begin{equation*}
V_{\mR}(h)  \, \coloneqq \,  \{x \in \mR^n \mid h_1(x)=\cdots=h_t(x)=0\}.
\end{equation*}
For a set $T \subseteq \re^n$, its vanishing ideal is
\[
\ideal{T}  = \{ p \in \re[x]: \, p(u) = 0
\, \mbox{~for all~} \, u \in T \}.
\]
Clearly, if $T = V_{\mR}(h)$, then $\ideal{h} \subseteq \ideal{T}$.
However, the reverse inclusion  may not hold.
The ideal $\ideal{h}$ is said to be {\it real radical}
if $\ideal{h}=\ideal{T}$ for $T = V_{\mR}(h)$.

A polynomial $p$ is said to be a sum of squares (SOS) if
$p=p_1^2+\dots+p_s^2$ for $p_1,\dots,p_s \in \mathbb{R}[x]$.
The set of all SOS polynomials in $x$ is denoted as $\Sigma[x]$.
For a degree $k$, denote the truncation
\[
\Sigma[x]_{k} \, \coloneqq  \, \Sigma[x] \cap  \mathbb{R}[x]_{k} .
\]
The notion of SOS polynomials plays a core role in optimization.
Interesting, this notion can also be defined for matrix polynomials
(see \cite{HilNie08,NiePMI}).
Clearly, an SOS polynomial is nonnegative everywhere,
while the reverse is typically not true.
The quality of SOS approximations
is discussed in \cite{FF20,NieSOSbd}.
For a polynomial tuple $g=(g_1,\dots,g_{\ell})$,
the  quadratic module generated by $g$ is
\be
\qmod{g} \,  \coloneqq  \,  \Sigma[x]+ g_1 \cdot \Sigma[x]+\cdots+ g_{\ell} \cdot \Sigma[x].
\ee
Similarly, the $k$th degree truncation of $\qmod{g}$ is
\be
\qmod{g}_{k} = \Sigma[x]_{k}+ g_1 \cdot \Sigma[x]_{k-\deg(g_1)}+\cdots
+ g_{\ell} \cdot \Sigma[x]_{k-\deg(g_{\ell})}.
\ee

The sum $\ideal{h}+\qmod{g}$ is said to be archimedean if there exists
$R >0$ such that $R-\|x\|^2 \in \ideal{h}+\qmod{g}$.
If it is archimedean, then the set
\[
S \coloneqq \left\{x \in \mathbb{R}^{n} \mid h(x)=0, g(x) \geq 0\right\}
\]
must be compact. Clearly, if $p \in \ideal{h}+\qmod{g}$, then $p \ge 0$ on $S$
while the converse is not always true. However,
if $p$ is positive on $S$ and $\ideal{h}+\qmod{g}$ is archimedean,
we have $p \in \ideal{h}+\qmod{g}$.
This conclusion is often referred as Putinar's Positivstellensatz.

\begin{thm}[\cite{putinar1993positive}] \label{thm2.1}
 Suppose  $\ideal{h}+\qmod{g}$ is  archimedean.
If a polynomial $p>0$ on  $S$, then $p \in \ideal{h}+\qmod{g}$.
\end{thm}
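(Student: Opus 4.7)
The plan is to argue by contradiction, combining separation in $\re[x]$ with a moment representation argument. Set $M := \ideal{h} + \qmod{g}$ and suppose $p > 0$ on $S$ yet $p \notin M$. First I would separate $p$ from the convex cone $M$: the archimedean hypothesis furnishes an order unit (from $R-\|x\|^2 \in M$ together with $1 \in M$), which makes $M$ ``full-dimensional'' in a sense allowing a geometric Hahn--Banach-type separation. This yields a linear functional $L \colon \re[x] \to \re$ with $L \geq 0$ on $M$, $L(p) \leq 0$, and $L(1) = 1$ after normalization. The relation $R-\|x\|^2 \in M$ then gives $L(\|x\|^2) \leq R$, and iterating with squared polynomials yields uniform moment bounds $|L(x^\alpha)| \leq C(R,|\alpha|)$ for every $\alpha$.

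Next I would upgrade $L$ to a representing measure. Since $M \supseteq \Sig[x]$, $L$ is a positive functional, and combined with the archimedean moment bounds from the previous step, Haviland's theorem produces a Borel probability measure $\mu$ on $\re^n$ with $L(q) = \int q \, \mt{d}\mu$ for every $q \in \re[x]$. The ideal/quadratic module structure then pins down the support of $\mu$: since $\pm h_i q \in \ideal{h} \subseteq M$ for every $q \in \re[x]$, I get $L(h_i q) = 0$ for all $q$ and $i$, hence $h_i = 0$ $\mu$-almost everywhere; since $g_j q^2 \in \qmod{g} \subseteq M$ for every $q$, I get $L(g_j q^2) \geq 0$, hence $g_j \geq 0$ $\mu$-almost everywhere. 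Therefore $\supp{\mu} \subseteq S$.

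To finish, $p > 0$ on the (compact, by archimedeanness) set $S$ together with $\mu$ being a probability measure with $\supp{\mu} \subseteq S$ forces $L(p) = \int_S p \, \mt{d}\mu > 0$, contradicting $L(p) \leq 0$.

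The main obstacle, where the archimedean hypothesis does its real work, is the second step: promoting the abstract separating functional $L$ to a genuine representing measure supported on $S$. Without archimedeanness one would at best get a positive functional with no measure realization, or a measure not supported on a compact set, and the final integration argument would collapse. An alternative route I could take avoids measures altogether: extend $M$ to a maximal quadratic module $M^\ast$ still missing $p$ via Zorn's lemma, show maximality plus archimedeanness forces $M^\ast \cap (-M^\ast)$ to be a prime ideal whose quotient is an archimedean totally ordered extension of $\re$, embed that quotient into $\re$ to obtain a ring homomorphism $\phi\colon \re[x] \to \re$ with $\phi \geq 0$ on $M^\ast$, and evaluate at $u = (\phi(x_1),\ldots,\phi(x_n))$. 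Either route concentrates the same essential difficulty in one place, namely converting ``archimedean positivity'' into an actual point of $S$.
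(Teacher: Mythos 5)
The paper does not prove Theorem~\ref{thm2.1}; it is Putinar's Positivstellensatz, cited directly from the 1993 reference, so there is no internal argument to compare against. On its own terms your outline is the standard modern proof, and your alternative route via maximal quadratic modules is the Kadison--Dubois/Jacobi-style abstract argument; both strategies are sound, so the overall plan is fine.

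One step needs repair before the first route is airtight: you invoke Haviland's theorem to produce the representing measure, but Haviland requires $L$ to be nonnegative on the \emph{entire} cone of polynomials nonnegative on $S$, whereas at that stage you only know $L\geq 0$ on $M=\ideal{h}+\qmod{g}$, which is a priori much smaller. What actually yields the measure is the combination of positive semidefiniteness ($L\ge 0$ on $\Sigma[x]\subseteq M$) with the growth bounds $L(\|x\|^{2k})\le R^k\,L(1)$ that you correctly extract from $(R-\|x\|^2)q^2\in M$. Those bounds are a very strong Carleman-type condition; the existence (and uniqueness) of a representing measure supported in $\{\|x\|^2\le R\}$ then follows from Nussbaum's theorem, or equivalently from the spectral theorem for the bounded commuting self-adjoint coordinate operators in the GNS representation of $L$ --- the latter being essentially Putinar's original 1993 argument. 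Once that measure $\mu$ exists, your support-pinning via $\pm h_i q\in M$ and $g_j q^2\in M$ and the final strict inequality $\int_S p\,\mt{d}\mu>0$ work as written. A smaller gloss: ``full-dimensional in a sense'' in the separation step should be replaced by the precise statement that archimedeanness means $1$ is an order unit for $(\re[x],M)$, so $M$ has nonempty algebraic interior in the order-unit seminorm and Eidelheit separation applies, with $L(1)>0$ because $L(1)=0$ together with the order-unit bounds would force $L\equiv 0$. That is exactly where the archimedean hypothesis earns its keep in this step, so it deserves to be spelled out.
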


In the above theorem, if  $p \ge 0$ on  $S$, $\ideal{h}$ is real radical and the
LICQC, SCC and SOSC hold at each zero of $p$ on $S$,
then we also have $p \in \ideal{h}+\qmod{g}$.
This is shown in \cite{nieopcd}.

For a polynomial $q\in \mR[x]_{2k}$ and a tms $w\in\re^{\N_{2k}^n}$,
the $k$th order {\em localizing matrix} of   $q$ for $w$
is the symmetric matrix $L_{q}^{(k)}[w]$ such that
\be \label{locmat:gi}
\vec{p}^T\Big(L_{q}^{(k)}[w]\Big)\vec{p}\,=\,\langle q p^2, w\rangle,
\ee
for  every polynomial $p \in \re[x]_s$ with $2s \le 2k-\deg(q)$.
In the above, $\vec{p}$ denotes the coefficient vector of $p$.
In particular, for the constant one polynomial $q = 1$,
the localizing matrix $L_{q}^{(k)}[w]$ becomes the kth order {\it moment matrix}
$
M_k[w]\,\coloneqq\, L_1^{(k)}[w].
$
For instance, when $n=2$, we have
\[
M_2[w] = \begin{bmatrix}
w_{00} & w_{10} & w_{01} & w_{20} & w_{11} & w_{02}\\
w_{10} & w_{20} & w_{11} & w_{30} & w_{21} & w_{12}\\
w_{01} & w_{11} & w_{02} & w_{21} & w_{12} & w_{03}\\
w_{20} & w_{30} & w_{21} & w_{40} & w_{31} & w_{22}\\
w_{11} & w_{21} & w_{12} & w_{31} & w_{22} & w_{13}\\
w_{02} & w_{12} & w_{03} & w_{22} & w_{13} & w_{04}
\end{bmatrix},
\]
\[
L_{x_1-x_2^2}^{(2)}[w] = \begin{bmatrix}
w_{10}-w_{02} & w_{20}-w_{12} & w_{11}-w_{03}\\
w_{20}-w_{12} & w_{30}-w_{22} & w_{21}-w_{13}\\
w_{11}-w_{03} & w_{21}-w_{13} & w_{12}-w_{04}
\end{bmatrix}.
\]
For the polynomial $q$, let $\mathscr{V}_{q}^{(2k)}[z]$
denote the vector such that
\be  \label{locvec:Vci}
\langle q  p, w \rangle  \, = \,
\big( \mathscr{V}_{q}^{(2k)}[w] \big)^T \vec{p},
\ee
for  every polynomial $p \in \re[x]_s$ with $s \le 2k-\deg(q)$.
The $\mathscr{V}_{q}^{(2k)}[w]$  is called the
{\it localizing vector} of $q$ and $w$.
We refer to \cite[Chap.~2]{niebook} for
more details about quadratic modules and localizing matrices/vectors.
They are basic tools for solving polynomial optimization and moment problems.
We refer to the books and surveys
\cite{HDLJ,LasBk15,Lau09,marshall2008positive,Sch09}
for more comprehensive introductions.

\section{Finite convergence of the  Moment-SOS hierarchy}
\label{sc:finite}

In this section, we study the Moment-SOS hierarchy
for solving the optimization problems \reff{gpm}-\reff{gpm:dual}.
Assume $K$ is the basic closed semialgebraic set
\be  \label{feas:set}
K \coloneqq
\left\{x \in \mathbb{R}^{n} \left| \begin{array}{l}
c_{j}(x)=0~(j \in \mathcal{E}), \\
c_{j}(x) \geq 0~(j \in \mathcal{I})
\end{array}\right\},\right.
\ee
where $c_j$ $(j \in \mathcal{E}\cup \mathcal{I})$ are given polynomials in $x$.
The $\mc{E}$ and $\mc{I}$ are disjoint labeling sets.
For convenience, we denote the polynomial tuples
\[
c_{eq}=(c_j(x))_{j\in \mathcal{E}},~ c_{in}=(c_j(x))_{j\in \mathcal{I}}.
\]
This section studies the finite convergence theory for the Moment-SOS hierarchy,
without the assumption that $\ideal{c_{eq}}$ is real radical.

\subsection{The Moment-SOS hierarchy}

Denote the degrees
\[
d_K = \max\limits_{j \in \mathcal{E}\cup \mathcal{I}}
\big \{ \lceil \deg(c_j)/2 \rceil  \big\}, \quad
d_0 =\max \Big \{\lceil \deg(f)/2 \rceil, ~d_K, \,
\max\limits_{1 \le i \le m }  \lceil \deg(a_i) /2\rceil \Big \}.
\]
For a degree $k \geq  d_0$, the $k$th moment relaxation for   \reff{gpm} is
\be  \label{momre:gpm}
\left\{ \baray{cl}
\min &  \langle f, w \rangle  \\
\st  &  \langle a_i,w\rangle  =  b_i \, \, ( 1 \le  i \le m_1),  \\
&\langle a_i,w\rangle   \geq  b_i \, \, (m_1 < i \le m),\\
& \mathscr{V}_{c_{j}}^{(2k)}[w] = 0~ (j\in \mathcal{E}), \\
&L_{c_{j}}^{(k)}[w] \succeq 0~(j\in \mathcal{I}), \\
& M_k[w] \succeq 0, \, w \in \mathbb{R}^{\mathbb{N}_{2 k}^{n}} .
\earay \right.
\ee
The dual optimization of \reff{momre:gpm} is the $k$th order SOS relaxation for  \reff{gpm:dual}:
\be \label{sos:gpm}
\left\{ \baray{cl}
\max &  b_1 \theta_1 + \cdots + b_m \theta_m  \\
\st & f - \sum\limits_{i=1}^m \theta_ia_i
\in  \ideal{c_{eq}}_{2k}+\qmod{c_{in}}_{2k},\\
& \theta \in \mR^m ,~ \theta_{m_1+1} \geq 0,\ldots, \theta_{m} \geq 0.
\earay \right.
\ee
As $k$ goes to infinity, the sequence of \reff{momre:gpm}-\reff{sos:gpm}
is called the Moment-SOS hierarchy for solving \reff{gpm}-\reff{gpm:dual}.

Let   $\phi^*$, $\vartheta^*$ denote the optimal value of
 \reff{gpm}, \reff{gpm:dual} respectively,
and let  $\phi_k$, $\vartheta_k$ denote the optimal value of
 \reff{momre:gpm}, \reff{sos:gpm} respectively.
By the weak duality, it holds that
\[
\vartheta_k \leq  \vartheta^*, \quad
\phi_k \le \phi^*, \quad
\vartheta_k \leq \phi_k, \quad
\vartheta^* \leq \phi^*.
\]
The hierarchy of \reff{momre:gpm}--\reff{sos:gpm} is said to have finite convergence
(or be {\em tight}) if there exists $k_0\in \mathbb{N}$ such that
 $\vartheta_k= \phi_k = \vartheta^* = \phi^*$ for all $k\geq k_0$.

In computational practice, a convenient criterion to check finite convergence
is the flat truncation \cite{CF05,nie2013certifying}.
Suppose  $w^*\in \r^{\n^n_{2k}}$  is a minimizer of $(\ref{momre:gpm})$.
The tms $w^*$ is said to have a flat truncation if there exists a degree
$t \in [d_0, k]$ such that
\begin{equation}  	\label{flat1.3}
\operatorname{rank} M_{t-d_{K}}[w^*|_{2t}]
\, = \,  \operatorname{rank} M_{t}[w^*|_{2t}].
\end{equation}
When it holds,  we must have
$ \phi_k =\phi^*$
and $w^*|_{d}$ is the optimal solution to \reff{gpm}.
We refer to \cite{HDLJ,Lau05, Lau09,niebook}
for more details.

\subsection{The finite convergence theory}

We now prove the finite convergence of the Moment-SOS hierarchy
\reff{momre:gpm}--\reff{sos:gpm}  under the archimedeanness and  optimality conditions.
This extends the finite convergence results in \cite{nieopcd}
for polynomial optimization to the generalized moment problem.

For convenience of notation, we denote the polynomial
\[
f_{\theta}(x) \, \coloneqq \, f(x) - \sum\limits_{i=1}^m \theta_i a_i(x).
\]
Suppose $\theta^* \in \mR^m$ is a maximizer of \reff{gpm:dual}.
Consider the polynomial optimization problem
\be  \label{stand:opt}
\left\{ \baray{rl}
\min & f_{\theta^*}(x)   \\
\st &  	c_{j}(x)=0~(j \in \mathcal{E}), \\
& c_{j}(x) \geq 0~(j \in \mathcal{I}).
\earay \right.
\ee
Generally, the optimal value of
\reff{stand:opt} is zero and each of its minimizers
is a zero of $f_{\theta^*}(x)$ in the set $K$.
This is guaranteed under the following assumption.

\begin{assumption} \label{assum}
Suppose $\theta^* \in \mR^m$ is a maximizer of \reff{gpm:dual},
$y^*$ is a minimizer of \reff{gpm} with $y^* \neq 0$,
and the Slater condition holds for \reff{gpm:dual}, i.e., there exits a feasible point
$\bar{\theta}\in \mR^m$ of \reff{gpm:dual}  such that
$f_{ \bar{\theta} } \in  int(\mathscr{P}_d(K))$.
\end{assumption}

Under Assumption \ref{assum}, the strong duality holds between
\reff{gpm:dual} and \reff{gpm}, so
\[
0 = \langle f, y^* \rangle - b^{\mathrm{T}} \theta^*
= \langle f_{\theta^*}, y^* \rangle = \int f_{\theta^*}(x)  \mt{d} \mu ,
\]
for every representing measure $\mu$ of $y^*$, with $\supp{\mu} \subseteq K$.
Note that $f_{\theta^*}\geq 0$ on $K$.
If $y^* \neq 0$, then $\supp{\mu} \ne \emptyset$
and each point in $\supp{\mu}$ must be a zero of $f_{\theta^*}$ in the set $K$.
The following is our major conclusion in this paper.

\begin{thm} \label{finite:gpm}
Suppose $\ideal{c_{eq}}+\qmod{c_{in}}$ is archimedean and
Assumption~\ref{assum} holds.
If the LICQC, SCC and SOSC hold at every  minimizer of \reff{stand:opt},
then we have:

\bit

\item [(i)]
The hierarchy of \reff{momre:gpm}--\reff{sos:gpm} has finite convergence, i.e.,
$\vartheta_k = \phi_k = \vartheta^* = \phi^*$ for all $k$ big enough.

\item [(ii)]
Every minimizer of the moment relaxation \reff{momre:gpm}
must have a flat truncation, when $k$ is sufficiently large.

\eit
\end{thm}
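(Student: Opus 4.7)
The plan is to prove part (ii) first and extract part (i) as a consequence of flat truncation. Under Assumption~\ref{assum}, the Slater condition for \reff{gpm:dual} combined with archimedeanness yields strong duality between \reff{gpm:dual} and \reff{gpm}, so $\vartheta^* = \phi^*$ is attained with a representing measure $\mu$ for the optimal $y^*$ of \reff{gpm}. Complementary slackness forces $\int f_{\theta^*}\,\mt{d}\mu = 0$; since $f_{\theta^*} \ge 0$ on $K$ and $y^* \ne 0$, the support $\supp{\mu}$ lies entirely in the minimizer set $U^*$ of \reff{stand:opt}. Under LICQC and SOSC at every point of $U^*$, together with compactness of $K$ (from archimedeanness), $U^*$ is a finite collection $\{u_1,\dots,u_r\}$ of isolated KKT points.

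Next, Lasserre's classical theorem gives asymptotic convergence $\phi_k \uparrow \phi^*$. I would upgrade this to finite convergence by proving (ii) directly. Extract any subsequential limit $y^\infty$ of the optimal $\{w^{(k)}\}$ after truncating to each fixed degree. Then $y^\infty \in \rk$ attains $\phi^*$, so by the first paragraph it has an $r$-atomic representing measure supported on $U^*$. For such a moment sequence one has $\rank M_{t-d_K}[y^\infty] = \rank M_t[y^\infty] = r$ for all $t$ exceeding a degree $t_0$ that depends only on the points $u_i$, so flat truncation \reff{flat1.3} holds at $y^\infty$ itself.

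The main obstacle is transferring the rank identity from $y^\infty$ down to the individual $w^{(k)}$; since moment matrix rank is only lower semicontinuous, weak convergence alone cannot deliver equality. This is precisely where the non-real-radicalness of $\ideal{c_{eq}}$ bites: the Nie-type global certificate $f_{\theta^*} \in \ideal{c_{eq}}+\qmod{c_{in}}$ can fail, so one cannot pair it against $w^{(k)}$ to force concentration. My plan is to use LICQC, SCC and SOSC at each $u_i$ to build a \emph{local} Lagrangian SOS decomposition of $f_{\theta^*}$ on a neighborhood of $u_i$ (via the standard KKT-with-quadratic-normal-form construction that underlies the argument of \cite{nieopcd}), then glue these local pieces into a global SOS identity modulo $\ideal{c_{eq}}_{2k}+\qmod{c_{in}}_{2k}$ valid at some fixed truncation degree --- where the gluing is done at the level of truncated quadratic modules rather than of ideals, so that real radicalness is bypassed. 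Pairing this identity against $w^{(k)}$, and exploiting $\phi_k \to \phi^*$ together with SDP complementary slackness between \reff{sos:gpm} and \reff{momre:gpm}, forces the moment mass of $w^{(k)}$ to concentrate on $U^*$ for $k$ large, yielding $\rank M_{t-d_K}[w^{(k)}] = \rank M_t[w^{(k)}] = r$.

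Finally, flat truncation implies $\phi_k = \phi^*$ via the Curto--Fialkow flat-extension theorem; this gives (ii) and the moment half of (i). For the SOS half, I would derive $\vartheta_k = \phi_k$ by SDP strong duality. Since the Slater point $\bar\theta$ satisfies $f_{\bar\theta} \in int(\mathscr{P}_d(K))$, it is strictly positive on $K$, so Theorem~\ref{thm2.1} applied to $f_{\bar\theta} - \varepsilon$ for small $\varepsilon > 0$ places $f_{\bar\theta}$ strictly inside $\ideal{c_{eq}}_{2k}+\qmod{c_{in}}_{2k}$ for $k$ large, providing a Slater point of \reff{sos:gpm}. Combining, $\vartheta_k = \phi_k = \phi^* = \vartheta^*$ for all sufficiently large $k$, which is (i).
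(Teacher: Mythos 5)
The central plan has a genuine gap. You propose to build, from local Lagrangian SOS decompositions at the minimizers $u_i$, a \emph{global} SOS identity for $f_{\theta^*}$ modulo $\ideal{c_{eq}}_{2k}+\qmod{c_{in}}_{2k}$. But when $\ideal{c_{eq}}$ is not real radical, $f_{\theta^*}$ typically does \emph{not} lie in $\ideal{c_{eq}}+\qmod{c_{in}}$, truncated or not; the paper points this out explicitly in the introduction, and it is the whole difficulty the theorem must overcome. What the argument of \cite{nieopcd} gives you is only $f_{\theta^*}\equiv \sigma_0 \bmod \ideal{V_\re(c_{eq})}$ for some $\sigma_0\in\qmod{c_{in}}$, i.e.\ a representation modulo the vanishing ideal of the real variety; the Real Nullstellensatz then lets you place $\hat f^{2k_1}+\sigma_1$ in $\ideal{c_{eq}}$ for $\hat f=f_{\theta^*}-\sigma_0$, but this never upgrades to $f_{\theta^*}\in\ideal{c_{eq}}+\qmod{c_{in}}$ at any finite truncation. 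So the pairing step, and the concentration of $w^{(k)}$ that you want to deduce from it, never get off the ground.

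The paper sidesteps this precisely by \emph{not} trying to certify $f_{\theta^*}$ itself. Instead it exploits the linear structure of the GMP: it shows that the perturbed objectives $f_{(1-\lambda)\theta^*+\lambda\bar\theta}$, for all $0<\lambda\le 1$, lie in the truncated cone $\ideal{c_{eq}}_{2k_0}+\qmod{c_{in}}_{2k_0}$ at a fixed order $k_0$. This uses the Slater point $\bar\theta$ with $f_{\bar\theta}-\delta\in\ideal{c_{eq}}+\qmod{c_{in}}$, the univariate SOS polynomial $\omega(t)=1+t+\gamma t^{2k_1}$, and the Real Nullstellensatz certificate to absorb the $\hat f^{2k_1}$ term. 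Letting $\lambda\to 0$ then forces $\vartheta_{k_0}=\vartheta^*$ even though $\theta^*$ itself need not be feasible for the SOS relaxation. This is a supremum argument, not a membership argument, and the distinction is essential. For part (ii), the paper likewise avoids your subsequential-limit route (which, as you note, only gives lower semicontinuity of rank, not stabilization): it works directly at each finite $k$, uses Marshall's zero-dimensionality result and a Gr\"obner basis of $I(S)$, and pushes the $\epsilon$-perturbation $f_{\theta^*}+\epsilon$ through the truncated representation to conclude $h_t\in\ker M_k[w^{(k)}]$, from which the rank condition follows. I'd suggest rebuilding your proof around the perturbed family $f_{(1-\lambda)\theta^*+\lambda\bar\theta}$ rather than $f_{\theta^*}$; the local-to-global gluing you sketch does not survive non-real-radicalness.
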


\begin{proof}
(i)	Note that the minimum value of \reff{stand:opt} is nonnegative.
Since the LICQC, SCC and SOSC hold at every minimizer of \reff{stand:opt},
by Theorem 1.1 of \cite{nieopcd}, there exists $\sigma_0 \in \qmod{c_{in}}$ such that
\[
f_{\theta^*}  \equiv \sigma_0 \mod \ideal{V_{\mR}(c_{eq})}.
\]
This means that  $\hat{f} \coloneqq f_{\theta^*} - \sigma_0$
vanishes identically on the real variety $V_{\mR}(c_{eq})$.
By the Real Nullstellensatz (see \cite{niebook}), there exist
$k_1\in \N$, $\sigma_1\in \Sigma[x]$ such that
$\hat{f}^{2k_1}+\sigma_1  \in	\ideal{c_{eq}}$.
The archimedeanness of $\ideal{c_{eq}}+\qmod{c_{in}}$
implies $K$ is compact. By the Slater condition,
there exist a feasible point $\bar{\theta}\in \mR^m$ for \reff{gpm:dual}
and a scalar $\dt >0$ such that
$f_{\bar{\theta}} \ge \dt >0$ on $K$.
By Theorem \ref{thm2.1}, we have that
\be \label{ell-c}
f_{\bar{\theta}}-  \dt \in \ideal{c_{eq}}+\qmod{c_{in}}.
\ee
%
For $\gamma\geq \frac{1}{2k_1}(1-\frac{1}{2k_1})^{2k_1-1} $,
the polynomial $\omega(t) \coloneqq 1+t+ \gamma t^{2 k_1}$
is SOS (cf. \cite{Nie13}), so
\be \label{cwtc}
\dt \cdot\omega(t/\dt) ~ = ~  \dt + t +
\frac{1}{2k_1}(1-\frac{1}{2k_1})^{2k_1-1} \dt^{1-2k_1} t^{2 k_1} \in \Sigma[t]_{2k_1}.
\ee
Combining with \reff{ell-c} and \reff{cwtc}, we have
\be  \label{stx:sos}
s(t,x)   \coloneqq  f_{\bar{\theta}} - \dt  + (\dt+t+\eta t^{2k_1})
\in \ideal{c_{eq}}+\qmod{c_{in}}+\Sigma[t],
\ee
for every scalar
\[
 \eta  \, \ge \, \frac{1}{2k_1}(1-\frac{1}{2k_1})^{2k_1-1} \dt^{1-2k_1} .
\]
For $0<\lambda\leq 1$, replacing $t$ by $(\lmd^{-1}-1) \hat{f}$
in $s(t,x)$, we then get
\[
\lambda f_{\bar{\theta}}+(1-\lambda) \hat{f}  =  \lambda
s( (\lmd^{-1}-1) \hat{f},x) - \lambda^{1-2k_1}(1-\lambda)^{2k_1}\eta\hat{f}^{2k_1}.
\]
Note that
\begin{equation*}
	\baray{cc}
	\lambda 	 f_{\bar{\theta}}+(1-\lambda) \hat{f}&=\lambda 	f_{\bar{\theta}}
     +(1-\lambda) f_{\theta^*}-(1-\lambda)\sigma_0\\
	&=f_{(1-\lambda)\theta^*+\lambda\bar{\theta}}-(1-\lambda)\sigma_0.
	\earay
\end{equation*}
Thus, we get
\be \nonumber
f_{(1-\lambda)\theta^*+\lambda\bar{\theta}} = (1-\lambda)\sigma_0 +
 \lambda s( (\lmd^{-1} -1 ) \hat{f},x)-\lambda^{1-2k_1}(1-\lambda)^{2k_1}\eta \hat{f}^{2k_1}.
\ee
We select $\eta$ sufficiently large so that \reff{stx:sos} holds. Then
there exists $k_0\in \mathbb{N}$ such that
\be \label{lambda:ind}
f_{(1-\lambda)\theta^*+\lambda\bar{\theta}} \in
    \ideal{c_{eq}}_{2k_0}+\qmod{c_{in}}_{2k_0}.
\ee
Since \reff{lambda:ind} holds for all  $0<\lambda\leq 1$
and $k_0$ is independent of $\lambda$, we know
$(1-\lambda)\theta^*+\lambda\bar{\theta}$   is feasible for
\reff{sos:gpm} at the relaxation order $k_0$, for all $0<\lambda\leq 1$.
As $\lambda \rightarrow 0$, we get
\be
  b^{\mathrm{T}} \big( (1-\lambda)\theta^*+\lambda\bar{\theta} \big )
= (1-\lambda)  b^{\mathrm{T}}\theta^* + \lambda  b^{\mathrm{T}} \bar{\theta}
 \rightarrow   b^{\mathrm{T}} \theta^* .
\ee
This implies that $\vartheta_{k_0} =  b^{\mathrm{T}} \theta^* = \vartheta^*$.
Since the Slater condition holds for \reff{gpm:dual}, we know
$\vartheta^*=\phi^*=\vartheta_{k_0} = \phi_{k_0} $, by the strong duality.
Hence, the hierarchy \reff{momre:gpm}--\reff{sos:gpm} has finite convergence.

(ii)
Suppose $w^{(k)}$ is a minimizer of \reff{momre:gpm} at the relaxation order $k$. First, we prove that $w^{(k)}_0\neq 0$. Suppose otherwise $w^{(k)}_0=0$. The constraint $M_k[w^{(k)}] \succeq 0$ implies that $w^{(k)}_{\alpha}=0$ for all $|\alpha| \leq k$. By Lemma~5.7 \cite{Lau09}, we have that $M_k[w^{(k)}]vec(x^{\alpha})=0$ for all $\alpha\leq k-1$.
	For all $|\alpha| \leq 2 k-2$, we can write $\alpha=\beta+\eta$ with $|\beta|,|\eta| \leq k-1$ and the following holds
	$$
	w^{(k) }_\alpha=\operatorname{vec}(x^\beta)^\mathrm{T} M_k[w^{(k)}] \operatorname{vec}(x^\eta)=0 .
	$$
	Clearly, the truncation $\left.w^{( k)}\right|_{2 k-2}$ is flat. It implies that the minimizer  of  \reff{gpm} is  $y^*= 0$, which is a contradiction.

Thus, we know that $w^{(k)}_0>0$. Up to scaling, we can assume $w^{(k)}_0=1$.
Since the Slater condition holds for \reff{gpm:dual},
we have  $\vartheta^*=\phi^*$ and \reff{gpm} has a minimizer, saying, $y^*$. Suppose $\mu$ is a $K$-representing measure of $y^*$. Since $y^*\neq 0$, we know that $\supp{\mu} \neq \emptyset$.  It holds that
\be \nonumber
\baray{cl}
\int f_{\theta^*} \mathrm{~d\mu}=\langle f - \sum\limits_{i=1}^m \theta_i^*a_i ,  y^*\rangle&=\langle f,y^*\rangle-\sum\limits_{i=1}^{m_1} \theta_i^*b_i-\sum\limits_{i=m_1+1}^m \theta_i^*\langle  a_i ,  y^* \rangle\\
&\leq \langle f,y^*\rangle-\sum\limits_{i=1}^{m} \theta_i^*b_i\\
&=0.

\earay
\ee
Note that $ f_{\theta^*}(x)\geq 0$ on $K$ and  $\supp{\mu} \neq \emptyset$. Hence, the optimal value of \reff{stand:opt} is 0.   The moment relaxation for polynomial optimization \reff{stand:opt} at the order $k$   is
\be  \label{mom:opt:stan}
\left\{ \baray{cl}
\min &  \langle f - \sum\limits_{i=1}^m \theta_i^*a_i , w \rangle  \\
\st  &  \langle 1,w\rangle  =  1,  \\
&\mathscr{V}_{c_{j}}^{(2k)}[w] =  0~ (j\in \mathcal{E}), \\
&L_{c_{j}}^{(k)}[w] \succeq 0~(j\in \mathcal{I}), \\
& M_k[w] \succeq 0, \, w \in \mathbb{R}^{\mathbb{N}_{2 k}^{n}} .
\earay \right.
\ee
By item (i), we know that
$\vartheta^*=\phi^*=\vartheta_{k}=\phi_{k} $ for $k\geq k_0$ and the following holds
\be \nonumber
\baray{cl}
\langle f_{\theta^*} ,  w^{(k)}\rangle &=\langle f, w^{(k)} \rangle - \sum\limits_{i=1}^{m_1} \theta_i^*b_i-\sum\limits_{i=m_1+1}^m \theta_i^*\langle  a_i ,  w^{(k)}\rangle\\
&\leq \langle f, w^{(k)} \rangle - \sum\limits_{i=1}^m \theta_i^*b_i=0.

\earay
\ee
Since
$
f_{\theta^*} \in cl(\ideal{c_{eq}}_{2k_0}+\qmod{c_{in}}_{2k_0}),
$
we have $\langle f_{\theta^*} ,  w^{(k)}\rangle \geq 0$. Thus,
\[
\langle f_{\theta^*} ,  w^{(k)}\rangle = 0
\]
and $w^{(k)}$ is a minimizer of \reff{mom:opt:stan} for all $k\geq k_0$. Let
\[
Q \, \coloneqq \,  \operatorname{Ideal}\left[c_{e q},
      f_{\theta^*}\right]+\mathrm{QM}\left[c_{i n}\right].
\]
Note that $Q$ is  archimedean and the intersection $J:=Q \cap-Q$
is an ideal. Since the LICQC, SCC, and SOSC hold at every  minimizer of \reff{stand:opt},
the boundary Hessian conditions hold at each zero of
$ f_{\theta^*}$ in $K$ (see \cite{nieopcd}).
By Theorem 2.3 of \cite{mar06},
the coordinate ring $\frac{\mathbb{R}[x]}{J}$ has dimension 0.  Let
$$
S  \coloneqq  \left\{\begin{array}{ll}
x \in \mathbb{R}^n \left| \begin{array}{l}
f_{\theta^*}(x)=0, \\
c_j(x)=0~(j \in \mathcal{E}), \\
c_j(x) \geq 0~(j \in \mathcal{I})
\end{array}\right.
\end{array}\right\} .
$$
Clearly, $S$ is the set of all   minimizers of  \reff{stand:opt}.
The SOSC implies that each minimizer of \reff{stand:opt} is an isolated minimizer.
Note that $K$ is compact. Hence, the set $S$ is finite and
the vanishing ideal $I(S)$ is zero dimensional.
Let $\left\{h_1, \ldots, h_r\right\}$ be a Gr\"{o}bner basis of $I(S)$
with respect to a total degree ordering.
Since $h_t \equiv 0$ $(t=1,\dots,r)$ on $S$ and $J$ is zero dimensional,
by \cite[Corollary~7.4.2]{marshall2008positive},
there exist $\ell\in \mathbb{N}$,
$\phi_0, \phi_j \in \mathbb{R}[x]$, and $\psi_0, \psi_j \in \Sigma[x]$ such that
$$
h_t^{2 \ell}+\sum_{j \in \mathcal{E}} \phi_j c_j+\phi_0 f_{\theta^*}+
\sum_{j \in \mathcal{I}} \psi_j c_j+\psi_0=0 .
$$
When $2 k$ is bigger than the degrees of all above polynomials, we have
\be \label{realzero}
\langle h_t^{2 \ell}, w^{(k)}\rangle+\sum_{j \in \mathcal{E}}\langle\phi_j c_j, w^{(k)}\rangle+\langle\phi_0 f_{\theta^*}, w^{(k)}\rangle+\sum_{j \in \mathcal{I}}\langle\psi_j c_j, w^{(k)}\rangle+\langle\psi_0, w^{(k)}\rangle=0.
\ee
From item (i), we know that  $\hat{f}^{2k_1}+\sigma_1  \in	\ideal{c_{eq}}$ and  the following holds
 \be \nonumber
f_{\theta^*}+\epsilon = ~\sigma_0+ \epsilon \cdot \omega(\hat{f}/\epsilon)-\gamma \epsilon^{1-2k_1}\hat{f}^{2k_1}.
\ee
Thus, when $k_0$ is sufficiently large, we have that for all $\epsilon>0$,
\[
f_{\theta^*}+\epsilon
\in  \ideal{c_{eq}}_{2k_0}+\qmod{c_{in}}_{2k_0}.
\]
Equivalently, there exist $\phi_j^{\epsilon}\in \mR[x]$,
and $\psi_0^\epsilon$, $\psi_j ^\epsilon \in \Sigma[x]_{2k_0}$ such that
\[
\deg(\phi_j^{\epsilon}c_j)\leq 2k_0 ~~(j\in \mathcal{E} ), \quad
\deg(\psi_j^{\epsilon}c_j)\leq 2k_0 ~~(j\in \mathcal{I} ),
\]
\be \label{per}
f_{\theta^*}+\epsilon=\sum_{j \in \mathcal{E}} \phi_j^{\epsilon} c_j+\sum_{j \in \mathcal{I}} \psi_j^{\epsilon} c_j+\psi_0^{\epsilon} .
\ee
Applying the bi-linear operation $\langle \cdot, w^{(k)}\rangle$
to \reff{per} for $k> k_0$, we get
\be
\epsilon=\sum_{j \in \mathcal{E}} \langle \phi_j^{\epsilon}c_j, w^{(k)}\rangle +\sum_{j \in \mathcal{I}} \langle \psi_j^{\epsilon} c_j, w^{(k)}\rangle+\langle \psi_0^{\epsilon}, w^{(k)}\rangle.
\ee
Since $\phi_i^{\epsilon}\in \mR[x]$, and $\psi_0^\epsilon$,
$\psi_j ^\epsilon $ are SOS, the constraints
\[
\mathscr{V}_{c_{j}}^{(2k)}[w] = 0~  (j\in \mathcal{E}), \quad
L_{c_{j}}^{(k)}[w] \succeq 0~(j\in \mathcal{I}), \quad M_k[w] \succeq 0
\]
imply that
\[
\langle \phi_j^{\epsilon}c_j, w^{(k)}\rangle=0 ~(j \in \mathcal{E}),~
\langle \psi_j^{\epsilon} c_j, w^{(k)}\rangle\geq0 ~(j \in \mathcal{I}) , ~
\langle \psi_0^{\epsilon}, w^{(k)}\rangle \geq0.
\]
Hence, we  have that
\be
\lim\limits_{\epsilon\rightarrow 0}\langle \psi_j^{\epsilon} c_j, w^{(k)}\rangle=0, ~\lim\limits_{\epsilon\rightarrow 0}\langle \psi_0^{\epsilon}, w^{(k)}\rangle=0.
\ee
By Lemma 2.5 of \cite{nie2013certifying}, we can get for $k$ big enough,
\be \label{asym1}
\lim\limits_{\epsilon\rightarrow 0}\langle \psi_j^{\epsilon} c_j\phi_0, w^{(k)}\rangle=0, ~\lim\limits_{\epsilon\rightarrow 0}\langle \psi_0^{\epsilon}\phi_0, w^{(k)}\rangle=0.
\ee
 Combining the equations \reff{per} and \reff{asym1}, it holds that
\be
\baray{rl}
& \langle \phi_0 f_{\theta^*}, w^{(k)}\rangle  \\
= &\lim\limits_{\epsilon\rightarrow 0} \big (\sum\limits_{j \in \mathcal{E}}
   \langle \phi_j^{\epsilon}c_j\phi_0, w^{(k)}\rangle +
    \sum\limits_{j \in \mathcal{I}} \langle \psi_j^{\epsilon}
    c_j\phi_0, w^{(k)}\rangle+\langle \psi_0^{\epsilon}\phi_0, w^{(k)}\rangle \big )\\
= & 0.\\
\earay
\ee
Hence, we can get
$$
\langle\phi_0 f_{\theta^*}, w^{(k)}\rangle=0.
$$
It follows from \reff{realzero} that
\be\label{realzero1}
\langle h_t^{2 \ell}, w^{(k)}\rangle+\sum_{j \in \mathcal{E}}\langle\phi_j c_j, w^{(k)}\rangle+\sum_{j \in \mathcal{I}}\langle\psi_j c_j, w^{(k)}\rangle+\langle\psi_0, w^{(k)}\rangle=0.
\ee
Since $h_t^{2 \ell}$, $\psi_j$ ($j \in \mathcal{I}$), $\psi_0$ are SOS, we can easily verify that
\[
\langle h_t^{2 \ell}, w^{(k)}\rangle\geq 0,~ \langle\phi_j c_j, w^{(k)}\rangle=0~ (j \in \mathcal{E}),
\]
\[
\langle\psi_0, w^{(k)}\rangle \geq0,~\langle\psi_j c_j, w^{(k)}\rangle \geq 0 ~ (j \in \mathcal{I}).
\]
Hence, $\langle h_t^{2 \ell}, w^{(k)}\rangle=0$ and  $h_t \in \operatorname{ker} M_k[w^{(k)}]$ (cf. \cite{LLR08, Lau09, niebook}).

Finally, we show that $\left.w^{(k)}\right|_{2 k-2}$ is flat.
Since $S$ is finite, we know that  the quotient space $\mathbb{R}[x] / I(S)$
is finite-dimensional (see Theorem 2.6 of \cite{Lau09}).
Let $\left\{q_1, \ldots, q_T \right\}$ be a  basis of $\mathbb{R}[x] / I(S)$.
For every $\alpha \in \mathbb{N}^n$, we have that
$$
x^\alpha=\sum_{i=1}^T \beta_i q_i+\sum_{t=1}^r p_t h_t,
\quad \operatorname{deg}\left(p_t h_t\right) \leq|\alpha|,
$$
where $\beta_i\in \mR$, $p_t\in \mR[x]  $.
Since $h_t \in \operatorname{ker}  M_k[w^{(k)}]$,
if $ |\alpha| \leq k-1$, we have
\[
p_t h_t \in \operatorname{ker}  M_k[w^{(k)}], \quad
x^\alpha-\sum_{i=1}^T \beta_i q_i \in \operatorname{ker} M_k[w^{(k)}] .
\]
Let $d_q:=\max\limits_{i \in [T]} \operatorname{deg} (q_i)$.
If $d_q+1 \leq|\alpha| \leq k-1$,
then each $\alpha$th column  of $ M_k[w^{(k)}]$
is a linear combination of its first $d_q$ columns.
When $k-1-d_K \geq d_q$, we have
\[
\operatorname{rank} M_{k-1-d_K}[w^{(k)}]=\operatorname{rank} M_{k-1}[w^{(k)}].
\]
This implies that  $w^{(k)}$ has a flat truncation when $k$ is big enough.
\end{proof}

\noindent
We would like to make the following remarks:
\begin{itemize}

\item [i)]	
Theorem~\ref{finite:gpm} gives sufficient conditions for the Moment-SOS
hierarchy to have finite convergence for solving the GMP.
It is typically hard to check these optimality conditions as a priori or computationally,
since the maximizer $\theta^*$ of \reff{gpm:dual} is usually not known.
However, there is no need to check them in the computational practice
of Moment-SOS relaxations.  As in Conjecture~\ref{GMP:conj},
we conjecture that these optimality conditions are satisfied
when the defining polynomials of the GMP have generic coefficients.

\item [ii)]
In Theorem \ref{finite:gpm}, we assume that the optimal value of
\reff{gpm:dual} is achievable.
By the classical linear conic optimization theory (see \cite{Bar02}),
if the Slater condition holds for \reff{gpm},
the dual optimization \reff{gpm:dual} has a maximizer.
The maximizer of \reff{finite:gpm} may not be unique.
When \reff{gpm:dual} has more than one maximizer,
Theorem~\ref{finite:gpm} only requires
Assumption~\ref{assum} to hold for one maximizer $\theta^*$ of \reff{gpm:dual},
not necessarily all.
This is because the proof of Theorem \ref{finite:gpm}
only uses optimality conditions of \reff{stand:opt} for $\theta^*$. We refer to Example \ref{ex3.4} for such an example.

\item [iii)]
In Theorem \ref{finite:gpm},  we assume that the dual optimization \reff{gpm:dual}
has an interior point, i.e., there exists a feasible point $\bar{\theta}\in \mR^m$
such that $f_{\bar{\theta}}>0$ on $K$.
Then, the strong duality holds for \reff{gpm}--\reff{gpm:dual}.	
There also exist other type conditions,
such as closedness of the image of the primal cone
under the primal affine map (see \cite[Theorem 7.2]{Bar02}),
which can also guarantee the strong duality.
In our proof, the existence of
an interior point for \reff{gpm:dual} is quite important,
more than just guaranteeing the strong duality.
For instance, it is used to get the inclusion relation \reff{cwtc}.

\end{itemize}

\subsection{The special case of polynomial optimization}
\label{sp:po}

Consider the polynomial optimization problem
\be  \label{po}
\left\{ \baray{rl}
\min & f(x)  \\
\st &  	c_{j}(x)=0~(j \in \mathcal{E}), \\
&c_{j}(x) \geq 0~(j \in \mathcal{I}),
\earay \right.
\ee
for given polynomials $f, c_j$. Let $f_{min}$ denote the optimal value of \reff{po}.
It is interesting to note that \reff{po}
is equivalent to the moment optimization
\be  \label{po:mom}
\left\{ \baray{rl}
\min & \langle f, y \rangle   \\
\st &  \langle 1, y \rangle = 1,  \\
&  y \in \mathscr{R}_d(K).
\earay \right.
\ee
In the above, the equality $\langle 1, y \rangle = 1$
means that every representing measure $\mu$ for $y$
must have unit mass (i.e., $\mu(K) = 1$).
The dual optimization of \reff{po:mom} is
\be  \label{po:sos}
\left\{ \baray{rl}
\max & \theta   \\
\st & f - \theta \cdot 1 \in \mathscr{P}_d(K), \\
    &  \theta \in \re.
\earay \right.
\ee
The optimal value $\theta^*$ of \reff{po:sos}
is the minimum value $f_{min}$ of \reff{po}.
The $k$th order moment relaxation for \reff{po:mom} is
\be  \label{pop:momrelax}
\left\{ \baray{cl}
\min &  \langle f, w \rangle  \\
\st  &  \langle 1,w\rangle  = 1,    \\
& \mathscr{V}_{c_{j}}^{(2k)}[w] = 0~ (j\in \mathcal{E}), \\
&L_{c_{j}}^{(k)}[w] \succeq 0~(j\in \mathcal{I}), \\
& M_k[w] \succeq 0, \, w \in \mathbb{R}^{\mathbb{N}_{2 k}^{n}} .
\earay \right.
\ee
When Theorem \ref{finite:gpm} is applied to \reff{po:mom}-\reff{po:sos},
we can get the following theorem.

\begin{thm} \label{poly:fin}
Suppose $\ideal{c_{eq}}+\qmod{c_{in}}$ is  archimedean.
If the LICQC, SCC, SOSC hold at every  minimizer of \reff{po},
then  every minimizer of the moment relaxation \reff{pop:momrelax}
must have a flat truncation when $k$ is sufficiently large.
\end{thm}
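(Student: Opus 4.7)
The plan is to recognize that Theorem~\ref{poly:fin} is essentially a corollary of Theorem~\ref{finite:gpm}(ii), obtained by casting the polynomial optimization problem \reff{po} in the form of the linear conic pair \reff{gpm:dual}--\reff{gpm}. Concretely, I would identify \reff{po:sos} with \reff{gpm:dual} by taking $m = m_1 = 1$, $a_1 \equiv 1$, and $b_1 = 1$, so that the decision variable $\theta$ plays the role of the vector of dual variables. Under this identification, the moment relaxation \reff{pop:momrelax} coincides with \reff{momre:gpm}, and the auxiliary polynomial optimization problem \reff{stand:opt} reduces to minimizing $f_{\theta^*}(x) = f(x) - \theta^*$ over $K$, which differs from \reff{po} only by the additive constant $-\theta^*$.

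Next I would verify that the hypotheses of Theorem~\ref{finite:gpm} are all in force. The archimedeanness of $\ideal{c_{eq}} + \qmod{c_{in}}$ is assumed. For Assumption~\ref{assum}, the maximizer of \reff{po:sos} is exactly $\theta^* = f_{\min}$, which exists because the archimedeanness forces $K$ to be compact and hence $f$ to attain its minimum on $K$. The Slater condition is immediate: any $\bar\theta < f_{\min}$ gives $f - \bar\theta > 0$ on the compact set $K$, so $f_{\bar\theta} \in \operatorname{int}(\mathscr{P}_d(K))$. Moreover, every feasible $w$ for \reff{pop:momrelax} satisfies $\langle 1, w\rangle = 1$, hence $w \neq 0$, which guarantees the $y^* \neq 0$ condition for any minimizer of \reff{po:mom}. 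Finally, since $f_{\theta^*} = f - f_{\min}$ differs from $f$ by a constant, the gradients, Hessians, and active constraint sets at candidate minimizers are identical, so the LICQC, SCC, and SOSC for \reff{po} transfer verbatim to the same conditions for \reff{stand:opt} at every minimizer.

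With these checks in hand, Theorem~\ref{finite:gpm}(ii) applies directly and yields that every minimizer of \reff{momre:gpm}, which is \reff{pop:momrelax} under our identification, admits a flat truncation for all sufficiently large $k$. This completes the argument.

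The proof is almost entirely bookkeeping, so there is no genuine mathematical obstacle; the only point that requires minor care is confirming that the automatic nonvanishing $\langle 1, y\rangle = 1$ discharges the assumption $y^* \neq 0$ in Assumption~\ref{assum}, and that the Slater point $\bar\theta$ exists because compactness of $K$ lets us push $\bar\theta$ strictly below $f_{\min}$.
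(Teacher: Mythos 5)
Your proposal is correct and follows precisely the route the paper takes: the paper's ``proof'' of Theorem~\ref{poly:fin} consists of the single remark that it follows from applying Theorem~\ref{finite:gpm} to the pair \reff{po:mom}--\reff{po:sos}, and your write-up simply fills in the implicit bookkeeping (identifying $m = m_1 = 1$, $a_1 \equiv 1$, $b_1 = 1$; verifying the Slater condition via a strictly suboptimal $\bar\theta$; noting $\langle 1, y^*\rangle = 1$ forces $y^* \neq 0$; and observing that shifting $f$ by the constant $f_{\min}$ leaves gradients, Hessians, and active sets unchanged so the LICQC, SCC, SOSC transfer). There is no gap and no divergence from the paper's approach.
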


When $\ideal{c_{eq}}$ is a real radical ideal,
the above result appeared in \cite[Section~5.4]{niebook}.
However, the real radicalness of
$\ideal{c_{eq}}$ is not required in Theorem~\ref{poly:fin}.
This improves the results in the earlier work
\cite{nie2013certifying,niebook}.
When there exists a equality constraint $c_j(x)=0$
for the set $K$, we can equivalently replace it by two inequalities
$c_j(x)\geq 0$, $-c_j(x)\geq0$. However, this reformulation will
make the linear independence constraint qualification
fail at each minimizer of \reff{po}, and we can not get
the finite convergence by using this formulation.

\bigskip
\noindent
{\bf Remark:}
In Theorem~\ref{finite:gpm}, we assume that the LICQC,  SCC and SOSC
hold at every minimizer of \reff{stand:opt},
which depends on the  maximizer $\theta^*$ of \reff{gpm:dual}.
When the GMP specializes to the polynomial optimization problem \reff{po},
the optimization \reff{stand:opt} reads
\be  \label{stand:opt1}
\left\{ \baray{rl}
\min & f(x)-f_{\min}   \\
\st &  	c_{j}(x)=0~(j \in \mathcal{E}), \\
& c_{j}(x) \geq 0~(j \in \mathcal{I}).
\earay \right.
\ee
Clearly, \reff{stand:opt1} is equivalent to \reff{po},
since their objectives differ only by the constant $f_{\min}$.
Thus, the LICQ, SCC, SOSC hold at every minimizer of \reff{stand:opt1}
if and only if they hold at every minimizer of \reff{po},
which is independent of the maximizers of \reff{gpm:dual}.

\subsection{Some exposition examples}

As shown in Theorem~\ref{finite:gpm}, if the LICQC, SCC, SOSC hold
at every minimizer of \reff{stand:opt},
not only the Moment-SOS hierarchy has finite convergence,
but also the flat truncation is satisfied.
This is observed in the following example.

\begin{example}\label{ex3.4}
For polynomials
\[
a_1=x_1^2 x_2^4+x_2^2 x_3^4+x_3^2 x_1^4,~ a_2=x_1^3 x_2^3+x_2^3 x_3^3+x_3^3 x_1^3,
\]
\[
a_3=x_1^5 x_2+x_2^5 x_3+x_3^5 x_1,
\]
we consider the GMP
\be  \label{ex1}
\left\{ \baray{rl}
\min & \langle x_1^6+x_2^6+x_3^6, y\rangle  \\
\st  &\langle a_i,y\rangle = 1,~i=1,2,3,\\
&   y \in \mathscr{R}_{6}(K),
\earay \right.
\ee
where $K$ is the unit sphere
$\{x\in \mR^3\mid x_1^2+x_2^2+x_3^2-1=0\}.$
The optimal value $\phi^*=1$. The dual problem of \reff{ex1} is
\be  \label{ex2:dual}
\left\{ \baray{rl}
\max & \theta_1+\theta_2+\theta_3  \\
\st &   x_1^6+x_2^6+x_3^6- \theta_1a_1-\theta_2a_2-\theta_3a_3
\in \mathscr{P}_{6}(K).
\earay \right.
\ee
For $\theta=(0,0,0)$, we have $f_{\theta}(x)=x_1^6+x_2^6+x_3^6\in int(\mathscr{P}_{6}(K))$,
i.e., the Slater condition holds. We can verify that  $\{ \theta_1+\theta_2+\theta_3=1,~\theta_i\geq 0~ (i=1,2,3)\}$ are all maximizers of \reff{ex2:dual}, which is infinite. For one maximizer  $\theta^*=(0,1,0)$, the maximum problem \reff{stand:opt} reads
\be  \label{ex2:opt}
\left\{ \baray{rl}
\min & x_1^6+x_2^6+x_3^6- (x_1^3 x_2^3+x_2^3 x_3^3+x_3^3 x_1^3) \\
\st
&x_1^2+x_2^2+x_3^2-1=0.
\earay \right.
\ee
All minimizers of \reff{ex2:opt} are
\[
(\frac{1}{\sqrt{3}},\frac{1}{\sqrt{3}},\frac{1}{\sqrt{3}}),
(\frac{-1}{\sqrt{3}},\frac{-1}{\sqrt{3}},\frac{-1}{\sqrt{3}}).
\]
 	The unit sphere is smooth, so the constraint qualification condition holds at every
 	feasible point. There is no inequality constraint, so strict complementarity is automatically satisfied. We can verify that the second order sufficient condition
 holds at both minimizers.
For instance, at the minimizer $u=(\frac{1}{\sqrt{3}},\frac{1}{\sqrt{3}},\frac{1}{\sqrt{3}})$,
$$
\nabla_x^2 L(u)=3 \cdot\left[\begin{array}{lll}
1 & 0 & 0 \\
0 & 1 & 0 \\
0 & 0 & 1
\end{array}\right]-\left[\begin{array}{l}
1 \\
1 \\
1
\end{array}\right]\left[\begin{array}{l}
1 \\
1 \\
1
\end{array}\right]^T, \quad G(u)^{\perp}=\left[\begin{array}{l}
1 \\
1 \\
1
\end{array}\right]^{\perp}.
$$
Clearly, the SOSC is satisfied at $u$.
By Theorem \ref{finite:gpm},
the hierarchy of \reff{momre:gpm}-\reff{sos:gpm} has finite convergence.
A numerical experiment by GloptiPoly~3 \cite{2009GloptiPoly}
indicates that $\vartheta_3= \phi_3=1$.
\end{example}

We would like to remark that if the ideal $\ideal{c_{eq}}$ is real radical,
the finite convergence of the Moment-SOS hierarchy
can be implied by \cite{nieopcd}. However, when $\ideal{c_{eq}}$ is not real radical,
the finite convergence still holds.
This is shown in Theorem \ref{finite:gpm}.
The following is such an example.

\begin{example}
We consider the GMP
\be  \label{ex3}
\left\{ \baray{rl}
\min & \langle x_1+\dots+x_6, y\rangle  \\
\st &  \langle x_1^3+\cdots+x_6^3,y\rangle \geq1, \\
    & \langle x_1^4+\cdots+x_6^4,y\rangle =1, \\
&   y \in \mathscr{R}_{4}(K),
\earay \right.
\ee
where the set $K$ is
\be \nonumber
\baray{ll}
K \quad = \quad \big \{x\in \mR^6:& x _1^2+\dots+x_6^2-2=0,~  2(x_1^2+x_2^2+x_3^2)-x_4=0, \\
&x_4-2(x_5^2+x_6^2)=0,~ x_1 \ge 0, \ldots, x_6 \ge 0 \big \}.
\earay
\ee
The optimal value $\phi^*=\frac{2+2\sqrt{2}}{3}$. One can see that $x_4-1\in \ideal{V_{\mR}(c_{eq})}$.
However,  $x_4-1 \notin \ideal{c_{eq}}$. Suppose otherwise it were true,
then there exist $h_1,h_2,h_3\in \mR[x]$ such that
\be \nonumber
\baray {ll}
x_4-1=&h_1\cdot(x_1^2+\dots+x_6^2-2)+h_2\cdot(2(x_1^2+x_2^2+x_3^2)-x_4)\\
&+h_3\cdot(x_4-2(x_5^2+x_6^2)).
\earay
\ee
Substitute $x$ for $(\frac{\sqrt{3}\mathrm{\bf i}}{3},\frac{\sqrt{3}\mathrm{\bf i}}{3},\frac{\sqrt{3}\mathrm{\bf i}}{3},-2,\frac{\sqrt{2}\mathrm{\bf i}}{2},\frac{\sqrt{2}\mathrm{\bf i}}{2})$ ($\mathrm{\bf i}$ is the imaginary unit) in the above identity,
we get the contradiction $-3=0$.
Thus, the ideal $\ideal{c_{eq}}$ is not real radical. The dual problem of \reff{ex3} is

\be  \label{ex3:dual}
\left\{ \baray{rl}
\max & \theta_1+\theta_2  \\
\st &    \sum\limits_{i=1}^6x_i-  \theta_1 \sum\limits_{i=1}^6x_i^3-\theta_2\sum\limits_{i=1}^6x_i^4
\in \mathscr{P}_{4}(K),\\
&   \theta\in \mR^2,~\theta_1 \geq 0.
\earay \right.
\ee
  For $\theta=(0,0)$, we have $f_{\theta}(x)=\sum\limits_{i=1}^6x_i\in int(\mathscr{P}_4(K))$,
so the Slater condition holds.
One can verify that the maximizer of \reff{ex3:dual} is
$\theta^*=(0,\frac{2+2\sqrt{2}}{3})$.
The corresponding optimization problem \reff{stand:opt} reads
\be  \label{ex3:dualmax}
\left\{ \baray{rl}
\min &\sum\limits_{i=1}^6x_i-\frac{2+2\sqrt{2}}{3}\sum\limits_{i=1}^6x_i^4 \\
\st &   x_1^2+\dots+x_6^2-2=0,\\
&2(x_1^2+x_2^2+x_3^2)-x_4=0,\\
&x_4-2(x_5^2+x_6^2)=0,\\
& x_1\geq0,\dots,~x_6\geq 0.
\earay \right.
\ee
The set of minimizers for \reff{ex3:dualmax} consists of the points:
\[
(\frac{\sqrt{2}}{2},0,0,1,\frac{\sqrt{2}}{2},0),~(0,\frac{\sqrt{2}}{2},0,1,
\frac{\sqrt{2}}{2},0),~(0,0,\frac{\sqrt{2}}{2},1,\frac{\sqrt{2}}{2},0),
\]
\[
(\frac{\sqrt{2}}{2},0,0,1,0,\frac{\sqrt{2}}{2}),~(0,\frac{\sqrt{2}}{2},0,1,0,
\frac{\sqrt{2}}{2}),~(0,0,\frac{\sqrt{2}}{2},1,0,\frac{\sqrt{2}}{2}).
\]
The LICQC, SCC, SOSC hold at all these minimizers.
A numerical experiment by GloptiPoly~3 indicates that $\vartheta_3= \phi_3=\phi^*$.
\end{example}

We remark that in Theorems~\ref{finite:gpm} and \ref{poly:fin},
none of the optimality conditions there can be dropped.
We refer to \cite{nieopcd} for such counterexamples.

\section{The Moment-SOS hierarchy for  GMPs with unbounded sets}
\label{sc:homo}

When the feasible set $K$ is unbounded, the Moment-SOS hierarchy \reff{momre:gpm}--\reff{sos:gpm}
may not converge. To fix this issue, a new type Moment-SOS hierarchy based on homogenization
is proposed in \cite{hny}. Its finite convergence is shown
if the ideal of equality constraining polynomials is real radical,
provided the LICQC, SCC, SOSC hold at every minimizer.
In this section, we prove the same conclusion
without the real radicalness assumption.
In particular, this resolves Conjecture~8.2 raised in \cite{hny} when it specializes to polynomial optimization.
We also prove similar results for the finite convergence of
the Moment-SOS hierarchy with denominators,
induced by Putinar-Vasilescu's Positivstellensatz
\cite{MLM21,maim,putinar1999positive}.

We begin with Moment-SOS relaxations for solving
generalized moment problems with unbounded sets.

\subsection{Homogenization of GMPs}
\label{ssc:unbgmp}

This subsection generalizes the homogenization trick given in \cite{hny,hny1}
to solve the GMP with unbounded sets
and proves its finite convergence
without the real radicalness assumption.

For a polynomial $p(x)$ of degree $\ell$, let $\tilde{p}$
denote its homogenization in  $\tilde{x} \coloneqq \left(x_0,x\right)$,
i.e., $\tilde{p}(\tilde{x}) = x_0^\ell p(x/x_0)$.
For the set $K$ as in \reff{feas:set},
its homogenization $\widetilde{K}$ is the set
\be \label{set:wtld:K}
\widetilde{K} \coloneqq \left\{\tilde{x} \in \mathbb{R}^{n+1}
\left| \baray{l}
\tilde{c}_{j}(\tilde{x})=0~(j \in \mathcal{E}), \\
\tilde{c}_{j}(\tilde{x}) \geq 0~(j \in \mathcal{I}), \\
\|\tilde{x}\|^2-1=0, ~ x_{0} \geq 0
\earay  \right.
\right\} .
\ee
Note that $\widetilde{K}$ is always compact.
The set $K$ is said to be {\em closed at infinity}  if
\be \label{closed:inf} 
 \widetilde{K} \,= \,  cl(\widetilde{K}\cap \{x_0>0\}).
\ee

When $K$ is  closed at infinity, a polynomial $f\geq 0$ on $K$ if and only if
its homogenization $\tilde{f} \geq 0$ on $\widetilde{K}$;  see \cite{hny1,njwdis}.
Denote the polynomial tuples
\be \label{sets:tld:ceqcin}
\tilde{c}_{eq}  \coloneqq  \big\{ \tilde{c_j}(\tilde{x}) \big\}_{j\in \mc{E} }
\cup \{ \|\tilde{x}\|^2-1 \}, \quad
\tilde{c}_{in}  \coloneqq  \big\{ \tilde{c_j}(\tilde{x}) \big\}_{j \in \mc{I} }
\cup \{ x_0 \}.
\ee
Recall that $d$ is the degree as in \reff{gpm}-\reff{gpm:dual}.
Denote the degree-d homogenization
\be \label{hat:fai}
\hat{f}  \, \coloneqq \,  x_0^{d}f( x/x_0 ), \quad
\hat{a}_i  \, \coloneqq \,  x_0^{d}a_i( x/x_0 )~(i=1\dots,m).
\ee
The homogenization of the generalized moment problem \reff{gpm} is
\be  \label{hom:gpm}
\left\{ \baray{rl}
\min & \langle \hat{f}, z\rangle  \\
\st & \langle \hat{a}_i,z\rangle   =  b_i \, \, ( 1 \le i \le m_1),\\
&\langle \hat{a}_i,z\rangle   \geq  b_i \, \, ( m_1 < i \le m), \\
&  z \in \mathscr{R}_{d}(\widetilde{K}).
\earay \right.
\ee
Similarly,  the homogenized optimization of \reff{gpm:dual} is
\be  \label{hom:psd}
\left\{ \baray{rl}
\max & \sum\limits_{i=1}^m b_i\theta_i  \\
\st & \hat{f} - \sum\limits_{i=1}^m \theta_i\hat{a}_i
\in \mathscr{P}_{d}(\widetilde{K}),\\
& \theta\in \mR^m,~\theta_i \geq 0 \,\, (m_1 < i \le m).
\earay \right.
\ee
Let  $\hat{\phi}^*$, $\hat{\vartheta}^*$ denote the optimal values of
 \reff{hom:gpm}, \reff{hom:psd} respectively.
By the weak duality, $\hat{\vartheta}^* \le \hat{\phi}^*$.
Denote
\[
\hat{f}_{\theta} \, \coloneqq \,
\hat{f} - \sum\limits_{i=1}^m \theta_i\hat{a}_i,
\]
\[
H = \Big \{ f - \sum\limits_{i=1}^m \theta_ia_i:
\theta\in \mR^m,~\theta_i \ge 0 \, \mbox{~for~} \,
 m_1 < i \le m  \Big \}.
\]
The following is the relationship between  the optimal values of the optimization \reff{gpm}--\reff{gpm:dual} and the homogenized   optimization  \reff{hom:gpm}--\reff{hom:psd}.

\begin{prop}
\label{pro:eq}
Suppose $K$ is closed at infinity. Then, we have
\[
\hat{\vartheta}^* = \vartheta^*, \quad  \hat{\phi}^*  \leq  \phi^* .
\]
Furthermore, if $H \cap int(\mathscr{P}_{d}(K)) \ne \emptyset$, then
\[
\hat{\vartheta}^*= \vartheta^*=\hat{\phi}^*=\phi^*.
\]
\end{prop}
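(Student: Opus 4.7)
The plan is to break the statement into three pieces and leverage the homogenization map $\Phi \colon K \to \widetilde{K}$, $u \mapsto (1,u)/\sqrt{1+\|u\|^2}$, as the main bridge. Note that $\Phi$ is a homeomorphism onto $\widetilde{K} \cap \{x_0>0\}$ with inverse $(x_0,x) \mapsto x/x_0$, and that for every $p \in \re[x]_d$ one computes $\hat{p}(\Phi(u)) = (1+\|u\|^2)^{-d/2} p(u)$.

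First, I will establish $\hat{\vartheta}^* = \vartheta^*$ by showing that the feasible sets of \reff{gpm:dual} and \reff{hom:psd} coincide. Since $\hat{f}_{\theta} = \hat{f} - \sum_i \theta_i \hat{a}_i$ is precisely the degree-$d$ homogenization of $f_{\theta}$, this reduces to proving that $p \geq 0$ on $K$ if and only if $\hat{p} \geq 0$ on $\widetilde{K}$, for every $p \in \re[x]_d$. The ``only if'' direction uses that for $(x_0,x) \in \widetilde{K}$ with $x_0>0$ we have $x/x_0 \in K$, hence $\hat{p}(x_0,x) = x_0^d p(x/x_0) \geq 0$; the extension to the part of $\widetilde{K}$ with $x_0 = 0$ then follows by continuity together with the closedness-at-infinity hypothesis $\widetilde{K} = cl(\widetilde{K} \cap \{x_0>0\})$. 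The ``if'' direction is immediate from the identity $\hat{p}(\Phi(u)) = (1+\|u\|^2)^{-d/2} p(u)$. Since the objectives of \reff{gpm:dual} and \reff{hom:psd} are identical, $\hat{\vartheta}^* = \vartheta^*$ follows.

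Second, to obtain $\hat{\phi}^* \leq \phi^*$, I will lift any feasible $y$ of \reff{gpm} to a feasible $z$ of \reff{hom:gpm} with matching objective. By Tchakaloff's theorem, $y$ admits a finitely supported representing measure $\mu = \sum_i \lambda_i \delta_{u_i}$ with $u_i \in K$ and $\lambda_i > 0$. Define the atomic measure $\nu \coloneqq \sum_i \lambda_i (1+\|u_i\|^2)^{d/2} \delta_{\Phi(u_i)}$ on $\widetilde{K}$, and set $z_\beta \coloneqq \int \tilde{x}^\beta \, d\nu$, so automatically $z \in \mathscr{R}_d(\widetilde{K})$. A direct computation then gives $\langle \hat{p}, z \rangle = \int \hat{p}\, d\nu = \sum_i \lambda_i p(u_i) = \langle p, y \rangle$ for every $p \in \re[x]_d$, because the weight $(1+\|u_i\|^2)^{d/2}$ cancels the factor produced by evaluating $\hat{p}$ at $\Phi(u_i)$. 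Consequently $z$ is feasible for \reff{hom:gpm} with $\langle \hat{f}, z \rangle = \langle f, y \rangle$, and minimizing over $y$ yields $\hat{\phi}^* \leq \phi^*$.

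Finally, under $H \cap int(\mathscr{P}_d(K)) \ne \emptyset$, there is a strictly feasible $\bar{\theta}$ for \reff{gpm:dual}, which is precisely the Slater condition and hence forces strong duality $\vartheta^* = \phi^*$. Combining this with the first two parts and the weak duality $\hat{\vartheta}^* \leq \hat{\phi}^*$ gives the chain $\phi^* = \vartheta^* = \hat{\vartheta}^* \leq \hat{\phi}^* \leq \phi^*$, collapsing all four quantities to a common value. The main technical obstacle lies in the second step: a general representing measure $\mu$ for $y$ need not render the non-polynomial weight $(1+\|u\|^2)^{d/2}$ integrable when $d$ is odd, so a naive weighted pushforward would fail. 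Invoking Tchakaloff's theorem to replace $\mu$ with a finitely supported measure bypasses this issue cleanly and makes every subsequent calculation elementary.
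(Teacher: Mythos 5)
Your proof is correct and takes essentially the same route as the paper: identify the feasible sets of \reff{gpm:dual} and \reff{hom:psd} via the closedness-at-infinity equivalence, lift a finitely atomic representing measure for any feasible $y$ of \reff{gpm} to an atomic measure on $\widetilde{K}$ with cancelling weights $(1+\|u_i\|^2)^{d/2}$ to get $\hat{\phi}^*\le\phi^*$, and close the loop with strong duality under the Slater condition. The only cosmetic difference is that you invoke Tchakaloff's (Richter/Bayer--Teichmann) theorem and re-derive the nonnegativity equivalence, where the paper cites its own Theorem~8.1.1 and the closedness-at-infinity reference directly.
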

\begin{proof}
Since $K$ is closed at infinity, the polynomial
$f - \sum\limits_{i=1}^m \theta_i a_i \geq 0$ on $K$
if and only if $ \hat{f}_{\theta} \geq 0$ on $\widetilde{K}$.
So, $\hat{\vartheta}^* = \vartheta^*$.

Suppose the tms $y\in \mR^{\mathbb{N}_{d}^{n}}$ is a feasible point of \reff{gpm}.
Then there exist  $u_t\in K$,  $\lambda_t \ge 0$ $(t=1,\dots,r)$ such that
(see \cite[Theorem~8.1.1]{niebook})
\be \label{decom:w}
y = \lambda_{1} [u_1]_{d} + \cdots +
\lambda_{r}[u_r]_{d} .
\ee
For each $t=1,\dots,r$, let
\[
\tilde{\lambda}_t = \lmd_t \|(1,u_t)\|^{d}, ~(\tau_t, v_t)= (1,u_t)/\sqrt{1+\|u_t\|^2}.
\]
Then we can see that
\[
\tilde{c}_j(\tau_t, v_t)=\tau_t^{\deg(c_j)}c_j(\frac{v_t}{\tau_t})=\tau_t^{\deg(c_j)}c_j(u_t)= 0~(j\in \mathcal{E}),
\]	
\[
\tilde{c}_j(\tau_t, v_t)=\tau_t^{\deg(c_j)}c_j(\frac{v_t}{\tau_t})=\tau_t^{\deg(c_j)}c_j(u_t)\geq  0~(j\in \mathcal{I}).
\]
Hence, we know $(\tau_t, v_t)\in \widetilde{K}$. Let
\be \label{atom:mu1}
z =\tilde{\lambda}_{1}[(\tau_1, v_1)]_{d}  + \cdots +
\tilde{\lambda}_{r} [(\tau_r, v_r)]_{d}.
\ee
A direct computation shows that
\be \nonumber
\baray{llll}
\langle \hat{f}, z\rangle & = \sum\limits_{t=1}^r \tilde{\lambda}_{t}\langle \hat{f}, [(\tau_t, v_t)]_{d}\rangle &= \sum\limits_{t=1}^r \tilde{\lambda}_{t} \hat{f}(\tau_t, v_t) & \\
&=\sum\limits_{t=1}^r \tilde{\lambda}_{t}\tau_t^d f(u_t)&
 =\sum\limits_{t=1}^r \lambda_{t}f(u_t) &= \langle f, y\rangle .
\earay
\ee
Similarly, we also have
\[
\langle \hat{a}_i,z\rangle   = \langle a_i,y\rangle= b_i \, \, ( 1 \le i \le m_1),
\]
\[
\langle \hat{a}_i,z\rangle = \langle a_i,y\rangle  \geq  b_i \, \, ( m_1 < i \le m).
\]
This shows that $z$ is a feasible point of \reff{hom:gpm}
and $\langle \hat{f}, z\rangle=\langle f, y\rangle$.
The above holds for every feasible point $y$ of \reff{gpm},
so $\hat{\phi}^*  \le  \phi^*.$

When $H \cap int(\mathscr{P}_{d}(K)) \ne \emptyset$,
%
%
the strong duality holds between \reff{gpm:dual} and \reff{gpm},
i.e., $\phi^* = \vartheta^*$, so,
\[
  \vartheta^* =   \phi^* \ge  \hat{\phi}^*  \ge  \hat{\vartheta}^* =   \vartheta^*.
\]
Therefore, all these values must be equal to each other.
\end{proof}

\begin{remark}
In the proof of Proposition~\ref{pro:eq}, we have seen that
if $y$ as in \reff{decom:w} is feasible for \reff{gpm},
then $z$ as in \reff{atom:mu1} is feasible for \reff{hom:gpm}.
The reverse of this is also true under some conditions. Suppose
\be \nonumber
z  =\tilde{\lambda}_{1}[(\tau_1, v_1)]_{d}  + \cdots +
\tilde{\lambda}_{r} [(\tau_r, v_r)]_{d},
\ee
is feasible for \reff{hom:gpm}, with $\tilde{\lmd}_t \ge 0$,
$(\tau_t, v_t) \in \widetilde{K}$ and all $\tau_t > 0$. Let
\[
\lmd_t =  \tilde{\lambda}_t \tau_t^{d}, ~
u_t= v_t / \tau_t.
\]
Then $y = \lambda_{1} [u_1]_{d} + \cdots + \lambda_{r}[u_r]_{d}$
is a feasible point of \reff{gpm}
and $\langle \tilde{f},z\rangle = \langle f, y \rangle$.
Therefore, if the above $z$ is a minimizer of \reff{hom:gpm},
then $y$ is also a minimizer of \reff{gpm}.
\end{remark}

For a degree $k \geq d_0$, the $k$th  order  moment relaxation of \reff{hom:gpm} is
\be \label{hom:mom:rel}
\left\{ \baray{cl}
\min &  \langle  \hat{f}, w \rangle  \\
\st  &  \langle  \hat{a}_i,w\rangle  =  b_i \, \, ( 1 \le  i \le m_1),  \\
&\langle  \hat{a}_i,w\rangle   \geq  b_i \, \, (m_1 < i \le m),\\
&\mathscr{V}_{1-\|\tilde{x}\|^2}^{(2k)}[w] = 0,~\mathscr{V}_{\tilde{c}_{j}}^{(2k)}[w] = 0~ (j\in \mathcal{E}), \\
&L_{x_0}^{(k)}[w] \succeq 0,~L_{\tilde{c}_{j}}^{(k)}[w] \succeq 0~(j\in \mathcal{I}), \\
& M_k[w] \succeq 0, \, w \in \mathbb{R}^{\mathbb{N}_{2 k}^{n+1}} .
\earay \right.
\ee
The $k$th order SOS relaxation of \reff{hom:psd} is
\be \label{hom:dual:rel}
\left\{ \baray{cl}
\max &  b_1 \theta_1 + \cdots + b_m \theta_m  \\
\st &  \hat{f} - \sum\limits_{i=1}^m \theta_i \hat{a}_i
\in  \ideal{\tilde{c}_{eq}}_{2k}+\qmod{\tilde{c}_{in}}_{2k},\\
& \theta \in \mR^m,~ \theta_{m_1+1} \geq 0,\ldots, \theta_{m} \geq 0 .
\earay \right.
\end{equation}
Denote the optimal values of  \reff{hom:mom:rel}, \reff{hom:dual:rel}
by  $\hat{\phi}_k$, $\hat{\vartheta}_k$ respectively.

\begin{thm}
 \label{hom:fin}
Suppose $K$ is closed at infinity,
$H \cap int(\mathscr{P}_{d}(K)) \ne \emptyset$ and the minimizer $y^*$
of \reff{gpm} satisfies $y^*\neq 0$.
Assume $\theta^* \in \mR^m$ is a maximizer of \reff{gpm:dual}.
If the LICQC, SCC, SOSC hold at every minimizer of the optimization
$(\tilde{x} \coloneqq (x_0, x))$
\be  \label{hstand:opt}
\left\{ \baray{rl}
\min & \hat{f}(\tilde{x}) - \sum\limits_{i=1}^m \theta_i^* \hat{a}_i (\tilde{x}) \\
\st &  	\tilde{c}_{j}(\tilde{x})=0~(j \in \mathcal{E}), \\
&\tilde{c}_{j}(\tilde{x}) \geq 0~(j \in \mathcal{I}).
\earay \right.
\ee
Then,  we have:
\bit

\item [(i)] The Moment-SOS hierarchy \reff{hom:mom:rel}--\reff{hom:dual:rel}
has finite convergence, i.e., for all $k$ sufficiently large,
\[
\hat{\vartheta}_k = \hat{\phi}_k = \vartheta^* = \phi^* .
\]

\item [(ii)] Every minimizer of the moment relaxation \reff{hom:mom:rel}
must have a flat truncation, when $k$ is sufficiently large.

\eit
\end{thm}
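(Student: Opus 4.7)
The plan is to reduce this theorem to Theorem~\ref{finite:gpm} applied to the \emph{homogenized} pair \reff{hom:psd}-\reff{hom:gpm}, viewing $\widetilde{K}$ and the constraint tuples $\tilde{c}_{eq},\tilde{c}_{in}$ from \reff{sets:tld:ceqcin} in the roles of $K,c_{eq},c_{in}$. The archimedean hypothesis is immediate: since $1-\|\tilde{x}\|^2\in\ideal{\tilde{c}_{eq}}$, we have $R-\|\tilde{x}\|^2\in\ideal{\tilde{c}_{eq}}+\qmod{\tilde{c}_{in}}$ for every $R\ge 1$. The LICQC, SCC, and SOSC at each minimizer of \reff{hstand:opt} are assumed outright.

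Next, I would verify the analogue of Assumption~\ref{assum} in the homogenized setting. First, $\theta^*$ is a maximizer of \reff{hom:psd}: closedness at infinity gives $f_{\theta^*}\in\mathscr{P}_d(K)\Leftrightarrow \hat{f}_{\theta^*}\in\mathscr{P}_d(\widetilde{K})$, so $\theta^*$ is feasible for \reff{hom:psd}, and by Proposition~\ref{pro:eq} its objective value equals the common optimum $\vartheta^*=\hat{\vartheta}^*$. Second, \reff{hom:gpm} admits a nonzero minimizer: from an atomic decomposition $y^*=\sum_t\lambda_t[u_t]_d$ with $\lambda_t>0$, $u_t\in K$ (which exists because $y^*\in\mathscr{R}_d(K)$ and $y^*\ne 0$), the lift $z^*=\sum_t\tilde{\lambda}_t[(\tau_t,v_t)]_d$ constructed exactly as in the proof of Proposition~\ref{pro:eq} is nonzero, feasible for \reff{hom:gpm}, and satisfies $\langle\hat{f},z^*\rangle=\langle f,y^*\rangle=\phi^*=\hat{\phi}^*$, so is a nonzero minimizer. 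Third, Slater for \reff{hom:psd}: from a $\bar{\theta}$ with $f_{\bar{\theta}}\in int(\mathscr{P}_d(K))$ I would derive $\hat{f}_{\bar{\theta}}\in int(\mathscr{P}_d(\widetilde{K}))$, using closedness at infinity together with compactness of $\widetilde{K}$.

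Once these are verified, Theorem~\ref{finite:gpm}(i) applied to the homogenized pair yields $\hat{\vartheta}_k=\hat{\phi}_k=\hat{\vartheta}^*=\hat{\phi}^*$ for all sufficiently large $k$, and Proposition~\ref{pro:eq} identifies these with $\vartheta^*=\phi^*$, giving part~(i). Part~(ii) follows immediately from Theorem~\ref{finite:gpm}(ii) applied to \reff{hom:mom:rel}.

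The main obstacle will be the Slater step: strict positivity of $f_{\bar{\theta}}$ on the unbounded $K$ does not by itself prevent the form $\hat{f}_{\bar{\theta}}$ from vanishing on the slice $\widetilde{K}\cap\{x_0=0\}$ of ``points at infinity''. I expect to handle this by using that membership in $int(\mathscr{P}_d(K))$ is strictly stronger than pointwise positivity (it is stable under small coefficient perturbations), and, if necessary, by adding a small multiple of $(\|\tilde{x}\|^2)^{d/2}$, which equals $1$ on $\widetilde{K}$ and is manufacturable from the archimedean certificate, to ensure uniform strict positivity of the lifted form on the compact set $\widetilde{K}$.
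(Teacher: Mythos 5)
Your reduction to Theorem~\ref{finite:gpm} applied to the homogenized pair \reff{hom:psd}--\reff{hom:gpm}, with $\widetilde K$, $\tilde c_{eq}$, $\tilde c_{in}$ in the roles of $K$, $c_{eq}$, $c_{in}$, is exactly the paper's strategy. Your verification of the archimedean hypothesis, the identification of $\theta^*$ as a maximizer of \reff{hom:psd} via closedness at infinity, and the lifting of $y^*$ to a nonzero minimizer $z^*$ of \reff{hom:gpm} using the atomic decomposition from Proposition~\ref{pro:eq} all match what the paper does (implicitly, in the last case).

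The step you correctly flag as the main obstacle is exactly where the paper defers to an external result: it cites Theorem~3.4 of \cite{hny1}, which states that under closedness at infinity, $H\cap int(\mathscr{P}_d(K))\ne\emptyset$ already gives a feasible $\bar\theta$ with $\hat f_{\bar\theta}>0$ on $\widetilde K$, and compactness of $\widetilde K$ then upgrades this to $\hat f_{\bar\theta}\in int(\mathscr{P}_d(\widetilde K))$. Your first proposed idea -- stability of $int(\mathscr{P}_d(K))$ under coefficient perturbations together with $\widetilde K = cl(\widetilde K\cap\{x_0>0\})$ -- is the right one and can be completed: if $\hat f_{\bar\theta}(0,u^*)=0$ for some $(0,u^*)\in\widetilde K$ with $\|u^*\|=1$, pick $i$ with $u^*_i\ne 0$ and perturb $f_{\bar\theta}$ by $\pm\epsilon\, x_i^d$; the homogenized perturbation is strictly negative at $(0,u^*)$, hence at a nearby point of $\widetilde K\cap\{x_0>0\}$, hence the dehomogenized polynomial is negative somewhere in $K$ for every $\epsilon>0$, contradicting $f_{\bar\theta}\in int(\mathscr{P}_d(K))$. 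Your second proposed fix -- adding a small multiple of $(\|\tilde x\|^2)^{d/2}$ -- should be dropped: the resulting polynomial is no longer of the form $\hat f-\sum_i\theta_i\hat a_i$, so it cannot serve as a Slater witness for \reff{hom:psd}. With the Slater step settled (by citation or by the perturbation argument), your proof is complete and agrees with the paper's.
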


\begin{proof}
It follows from Proposition~\ref{pro:eq} that  $\theta^* \in \mR^m$
is also a maximizer of \reff{hom:psd} and
$\hat{\vartheta}^*= \vartheta^*=\hat{\phi}^*=\phi^*.$
By Theorem 3.4 \cite{hny1}, the condition $H \cap int(\mathscr{P}_{d}(K)) \ne \emptyset$
implies that there exists $\bar{\theta}\in \mR^m$ with
$\theta_{m_1+1}\geq 0,\dots,\bar{\theta}_m\geq 0$
such that $\hat{f}_{\bar{\theta}}>0$ on $\widetilde{K}$,
i.e., the Slater condition holds for optimization \reff{hom:psd}.
Thus, Assumption \ref{assum} holds for  optimization \reff{hom:psd}.
Note that $\ideal{\tilde{c}_{eq}}+\qmod{\tilde{c}_{in}}$ is archimedean.
The conclusions follow from Theorem \ref{finite:gpm}.
\end{proof}

\bigskip \noindent
{\bf Remark:}
To verify that $K$ is closed at infinity,  we need to check the condition~\reff{closed:inf},
which can be quite tricky and depends on the choice of
describing polynomials for $K$ (see \cite{njwdis}).
However, being closed at infinity is a generic property for semialgebraic sets
(see \cite{guo2014minimizing}).
%
Most frequently appearing unbounded semialgebraic sets are closed at infinity.
For instance, the nonnegative orthant $\re_+^n$,
the exterior ball $\{ x^Tx \geq 1\}$, and
the exterior box $\{ x_1^2\geq 1,\dots,x_n^2\geq 1\}$
are all closed at infinity.

\bigskip
The following is an illustrated example for the GMP with unbounded sets,
where the ideal $\ideal{\tilde{c}_{eq}}$ is not real radical.
\begin{example}
We consider the GMP
\be  \label{ex4:1}
\left\{ \baray{rl}
\min & \langle x_1^4+(2x_2-1)^4+(2x_3-1)^4, y\rangle  \\
\st  &\langle x_2^2+x_3^2, y\rangle = 1,\\
 &\langle x_1^2x_2+x_2^2x_3+x_3^2x_1, y \rangle \geq \frac{1}{5},\\
&   y \in \mathscr{R}_{4}(K),
\earay \right.
\ee
with the unbounded set from \cite{bhl}:
\[
K=\{x\in \mR^3:x_1x_2x_3=0,~x_3(x_1^2+x_2^2+x_3^2+x_2)=0,~x_2(x_2+x_3)=0\}.
\]
The optimal values $\phi^*=\hat{\phi}^*=32$. One can check that
$x_1x_3\in \ideal{V_{\mR}(\tilde{c}_{eq})}$ but
$x_1x_3\notin \ideal{\tilde{c}_{eq}}$. Suppose otherwise it were true,
then there exist $h_1,h_2,h_3,h_4\in \mR[\tilde{x}]$ such that
\be \nonumber
\baray{ll}
x_1x_3&=h_1 \cdot x_1x_2x_3 +h_2\cdot x_3(x_1^2+x_2^2+x_3^2+x_2x_0)\\
&+h_3\cdot x_2(x_2+x_3)+h_4\cdot (x_0^2+x_1^2+x_2^2+x_3^2-1) .
\earay
\ee
Substituting $(x_0,x_2)$ for $(1,0)$ in the above,
we get the representation
\[
x_1x_3= \Big(h_2(1,x_1,0,x_3)x_3+h_4(1,x_1,0,x_3) \Big)(x_1^2+x_3^2),
\]
which can never hold.
So, $\ideal{\tilde{c}_{eq}}$ is not real radical.
The dual optimization problem of \reff{ex4:1} is
\be  \label{ex2:dual4}
\left\{ \baray{rl}
\max & \theta_1+\frac{1}{5}\theta_2 \\
\st &   x_1^4+(2x_2-1)^4+(2x_3-1)^4- \theta_1(x_2^2+x_3^2) \\
    &  \qquad \qquad \qquad     -\theta_2(x_1^2x_2+x_2^2x_3+x_3^2x_1)
          \in \mathscr{P}_{4}(K),\\
&\theta\in \mR^2,~ \theta_2\geq0.
\earay \right.
\ee
For $\theta=(0,0)$, we have $f_{\theta}=x_1^4+(2x_2-1)^4+(2x_3-1)^4 \in int(\mathscr{P}_{4}(K))$,
so $H \cap int(\mathscr{P}_{4}(K)) \ne \emptyset$.
A maximizer of \reff{ex2:dual4} is $\theta^*=(32,0)$ and
the optimization problem~\reff{hstand:opt} reads
\be  \nn \label{ex2:opt4}
\left\{ \baray{rl}
\min &x_1^4+(2x_2-x_0)^4+(2x_3-x_0)^4 -32(x_2^2+x_3^2)x_0^2 \\
\st
&x_1x_2x_3=0,~x_2(x_2+x_3)=0,\\
&x_3(x_1^2+x_2^2+x_3^2+x_2x_0)=0,\\
&x_0^2+x_1^2+x_2^2+x_3^2=1,\\
& x_0\geq0.
\earay \right.
\ee
Its unique minimizer is $(\frac{\sqrt{6}}{3},0,\frac{-\sqrt{6}}{6},\frac{\sqrt{6}}{6})$,
and we can verify that the LICQC, SCC, SOSC all hold at it.
By Theorem \ref{hom:fin},
the hierarchy \reff{hom:dual:rel}-\reff{hom:mom:rel} has finite convergence.
A numerical experiment by GloptiPoly~3 indicates that $\hat{\vartheta_2}= \hat{\phi}_2=32$.
\end{example}

\subsection{Polynomial optimization with unbounded sets}
\label{ssc:pop}

Consider the polynomial optimization problem
\be  \label{po4}
\left\{ \baray{rl}
\min & f(x)  \\
\st &  	c_{j}(x)=0~(j \in \mathcal{E}), \\
&c_{j}(x) \geq 0~(j \in \mathcal{I}),
\earay \right.
\ee
for given polynomials $f,  c_j\in \mR[x]$.
We consider the case that the feasible set $K$ is unbounded.
Let $f_{min}$ denote the optimal value of \reff{po4}.
Suppose the degree of $f$ is $d$. A homogenized Moment-SOS hierarchy
is proposed in \cite{hny} for solving \reff{po4} when $K$ is unbounded.

Recall the notations $\tilde{c}_{eq}, \tilde{c}_{in}$
as in \reff{sets:tld:ceqcin}.
For a degree $k \geq \lceil \frac{d}{2}\rceil$, the $k$th order homogenized SOS relaxation
of \reff{po4} is
\begin{equation}  \label{h3.3}
	\left\{ \baray{rl}
	\max &   \gamma \\
	\st &  \tilde{f}(\tilde{x})- \gamma x_0^d \in
	\ideal{\tilde{c}_{eq}}_{2k} +\qmod{\tilde{c}_{in}}_{2k}.
	\earay \right.
\end{equation}
Its dual optimization is the $k$th order moment relaxation
\begin{equation}      \label{hd3.3}
	\left\{ \baray{rl}
	\min  &  \langle \tilde{f}, y \rangle  \\
	\st &  \langle x_0^d, w \rangle = 1, \\
	 & \mathscr{V}_{1-\|\tilde{x}\|^2}^{(2k)}[w] = 0,~\mathscr{V}_{\tilde{c}_{j}}^{(2k)}[w] = 0~ (j\in \mathcal{E}), \\
	&L_{x_0}^{(k)}[w] \succeq 0,~L_{\tilde{c}_{j}}^{(k)}[w] \succeq 0~(j\in \mathcal{I}), \\
	&  M_k[w] \succeq 0, \,
	w \in \re^{ \N^{n+1}_{2k} }.
	\earay \right.
\end{equation}
Let $f_k$ and $f^{\prime}_k$ denote the optimal values of
\reff{h3.3}, \reff{hd3.3} respectively. We remark that
\reff{h3.3}--\reff{hd3.3} is a special case of
\reff{hom:mom:rel}--\reff{hom:dual:rel} for \reff{po4}.

We consider the homogenized optimization problem
(note $\tilde{x}: = (x_0, x)$)
\be   \label{h3.5}
\left\{ \baray{rl}
\min  &  F(\tilde{x}) \coloneqq \tilde{f}(\tilde{x}) - f_{min} \cdot x_0^d  \\
\st &   \tilde{c}_{j}(\tilde{x})=0~(j \in \mathcal{E}), \\
&   x_0^2 + \| x \|^2-1=0, \\
&  \tilde{c}_{j}(\tilde{x}) \geq 0~(j \in \mathcal{I}),\\
&   x_{0} \geq 0 .
\earay \right.
\ee
Assume the optimal value $f_{min} > -\infty$ and $K$ is closed at infinity.
Note that the optimal value of \reff{h3.5} is 0, and a feasible point $x^* \in K$
is a  minimizer of $(\ref{po4})$ if and only if the point
$\tilde{x}^* \coloneqq (1+\|x^*\|^2)^{-\half} (1, x^*)$  is a minimizer of \reff{h3.5}.

In the work \cite{hny}, the Moment-SOS hierarchy \reff{h3.3}-\reff{hd3.3}
is shown to have finite convergence if $\ideal{\tilde{c}_{eq}}$
is real radical, provided the LICQC, SCC and SOSC hold at every  minimizer of \reff{h3.5}.
We remark that these optimality conditions are essentially equivalent
for \reff{po4} and \reff{h3.5}.
Here, we prove the same conclusion holds even if
$\ideal{\tilde{c}_{eq}}$ is not real radical.
This resolves Conjecture~8.2 of the work \cite{hny}.

\begin{thm} \label{finite:poin}
Suppose  $K$ is closed at infinity and $f\in \mR+ int(\mathscr{P}_{d}(K)).$
If the LICQC, SCC and SOSC hold at every minimizer of \reff{h3.5},
then the Moment-SOS hierarchy \reff{h3.3}-\reff{hd3.3} has finite convergence,
i.e., $f_k = f^{\prime}_k = f_{min}$ for all $k$ sufficiently large.
Furthermore, every minimizer of the moment relaxation \reff{hd3.3}
must have a flat truncation, when $k$ is sufficiently large.
\end{thm}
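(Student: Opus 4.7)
The plan is to deduce this theorem as a specialization of Theorem~\ref{hom:fin} to the polynomial optimization problem viewed as a generalized moment problem. First, I would reformulate \reff{po4} as an instance of \reff{gpm:dual}-\reff{gpm} by taking a single constraint: set $m = m_1 = 1$, $a_1 \equiv 1$, and $b_1 = 1$, so that the moment problem becomes the equivalent GMP $\min\{\langle f, y\rangle : \langle 1, y\rangle = 1,\ y \in \mathscr{R}_d(K)\}$ with dual $\max\{\theta : f - \theta \in \mathscr{P}_d(K)\}$. By standard strong duality (guaranteed by $f \in \mR + \mathrm{int}(\mathscr{P}_d(K))$), the unique maximizer of this one-variable dual is $\theta^* = f_{min}$.

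Next, I would verify the hypotheses of Theorem~\ref{hom:fin} one by one in this specialization. The closedness of $K$ at infinity is assumed. The Slater-type condition $H \cap \mathrm{int}(\mathscr{P}_d(K)) \neq \emptyset$ translates precisely to $f \in \mR + \mathrm{int}(\mathscr{P}_d(K))$, which is the hypothesis. The requirement $y^* \neq 0$ is automatic because any feasible $y$ satisfies $\langle 1, y\rangle = 1$, so every minimizer is a nonzero tms represented by a probability measure on $K$. It then remains to observe that the polynomial $\hat f - \sum \theta_i^* \hat a_i$ appearing in \reff{hstand:opt} becomes $\tilde f(\tilde x) - f_{min}\cdot x_0^d$ and the constraint set in \reff{hstand:opt} matches that of \reff{h3.5} (both use the tuples $\tilde c_{eq}$, $\tilde c_{in}$ defined in \reff{sets:tld:ceqcin}). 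Thus the optimality conditions LICQC/SCC/SOSC for \reff{h3.5} are literally those of \reff{hstand:opt}.

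Finally, I would match the moment-SOS relaxations themselves. The hierarchy \reff{hom:dual:rel}-\reff{hom:mom:rel} specialized to $m = m_1 = 1$, $a_1 \equiv 1$, $b_1 = 1$ coincides with \reff{h3.3}-\reff{hd3.3}: the constraint $\langle \hat a_1, w\rangle = b_1$ becomes $\langle x_0^d, w\rangle = 1$, and the SOS representation $\hat f - \theta\hat a_1 \in \ideal{\tilde c_{eq}}_{2k} + \qmod{\tilde c_{in}}_{2k}$ becomes $\tilde f - \gamma x_0^d \in \ideal{\tilde c_{eq}}_{2k} + \qmod{\tilde c_{in}}_{2k}$. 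By Theorem~\ref{hom:fin}(i), the optimal values of these relaxations equal the common value $\hat\vartheta^* = \hat\phi^* = \phi^* = f_{min}$ for all sufficiently large $k$, and by part (ii) every moment minimizer admits a flat truncation.

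The main obstacle is purely bookkeeping: ensuring that the one-dimensional dual variable $\theta$ of the reformulated GMP really does correspond to the scalar $\gamma$ in \reff{h3.3}, that the specializations of $\hat f_\theta$ and \reff{hstand:opt} match $F(\tilde x)$ and \reff{h3.5} exactly, and that the Slater-type hypothesis in the GMP language is equivalent to $f \in \mR + \mathrm{int}(\mathscr{P}_d(K))$. Once these identifications are made explicit, the result is an immediate corollary of Theorem~\ref{hom:fin} with no new analytic content required.
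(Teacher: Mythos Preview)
Your proposal is correct and follows essentially the same approach as the paper: reformulate \reff{po4} as the GMP with $m=m_1=1$, $a_1\equiv 1$, $b_1=1$, verify that the hypotheses of Theorem~\ref{hom:fin} specialize to those of the present theorem (in particular $H\cap\mathrm{int}(\mathscr{P}_d(K))\ne\emptyset$ becomes $f\in\mR+\mathrm{int}(\mathscr{P}_d(K))$ and the dual maximizer is $\theta^*=f_{min}$ so that \reff{hstand:opt} coincides with \reff{h3.5}), and then invoke Theorem~\ref{hom:fin} directly. The paper's proof is terser but identical in structure; your additional bookkeeping (matching \reff{hom:dual:rel}--\reff{hom:mom:rel} with \reff{h3.3}--\reff{hd3.3} and noting $y^*\ne 0$ from $\langle 1,y\rangle=1$) simply spells out what the paper leaves implicit.
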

\begin{proof}
The optimization \reff{po4} is equivalent to the following GMP
\be  \nn \label{gpmpo}
\left\{ \baray{rl}
\min & \langle f, y\rangle  \\
\st & \langle 1,y\rangle   = 1,\\
&  w \in \mathscr{R}_{d}(K).
\earay \right.
\ee
%
%
The assumption $f \in \mR+ int(\mathscr{P}_{d}(K))$
implies the minimum value $f_{min} > -\infty$.
For the corresponding optimization problem \reff{gpm:dual},
the optimal value is $f_{min}$ and it is achievable.
The conclusion then follows directly from Theorem~\ref{hom:fin}.
\end{proof}

The following is an  example for the polynomial optimization with unbounded sets, where  the ideal $\ideal{\tilde{c}_{eq}}$ is not real radical.
\begin{example}
We consider the polynomial optimization
\be  \label{poex1}
\left\{ \baray{rl}
\min & x_1^6+x_2^6+3 x_1^2 x_2^2-x_1^4\left(x_2^2+1\right)-x_2^4
        \left(x_1^2+1\right)-\left(x_1^2+x_2^2\right) \\
\st  &x_1^3+x_1x_2^4+x_1=0,~x_2\geq 0.
\earay \right.
\ee
The optimal value $f_{min}=-1$. One can check that $x_1\in \ideal{V_\mR(\tilde{c}_{eq})}$ but
 $x_1 \notin \ideal{\tilde{c}_{eq}}$. Suppose otherwise it were true,
then there exist $h_1,h_2\in \mR[\tilde{x}]$ such that
\be \nonumber
x_1=h_1\cdot(x_1^3x_0^2+x_1x_2^4+x_1x_0^4)+h_2\cdot(x_1^2+x_2^2+x_0^2-1).
\ee
Substituting $x$ for $(0,1,
0)$
 in the above,
we get the contradiction $1=0$. So, the ideal $\ideal{\tilde{c}_{eq}}$ is not real radical.
The optimization problem \reff{h3.5} reads
\be  \nn \label{poex1:dual}
\left\{ \baray{rl}
\min & x_0^6+x_1^6+x_2^6++3 x_1^2 x_2^2x_0^2-x_1^4\left(x_2^2+x_0^2\right) \\
     & \qquad \qquad  -x_2^4 \left(x_1^2+x_0^2\right)-x_0^4\left(x_1^2+x_2^2\right) \\
\st  &x_1^3x_0^2+x_1x_2^4+x_1x_0^4=0,\\
&x_0^2+x_1^2+x_2^2=1,\\
& x_0\geq0,~x_2\geq 0.
\earay \right.
\ee
Its unique minimizer is $(\frac{1}{\sqrt{2}},0,\frac{1}{\sqrt{2}})$
and we can verify that the LICQC, SCC, SOSC hold at it.
By Theorem \ref{finite:poin},
the hierarchy \reff{h3.3}-\reff{hd3.3} has finite convergence.
A numerical experiment by GloptiPoly~3 indicates that $f_3= f^{\prime}_3=-1$.
\end{example}

\subsection{The Moment-SOS hierarchy with denominators}

Let  $K$ be  the feasible set of \reff{po4}.
The Putinar-Vasilescu's Positivstellensatz states that for a polynomial $f$,
if the highest degree homogeneous part $f^\hm$ is a positive definite form
and $f >0$ on $K$, then
\[
(1+\|x\|^2)^kf \in \ideal{c_{eq} } + \qmod{ c_{in} }
\]
for some power $k \in \mathbb{N}$; see \cite{putinar1999positive}.
This motivates the Moment-SOS hierarchy with denominators to solve \reff{po4}:
\be  \label{rel2}
\left\{\baray{rl}
\max  &   \gamma \\
\st  &  \theta^k\left(f- \gamma \right) \in
\ideal{c_{eq} }_{2k} + \qmod{ c_{in} }_{2k} ,
\earay \right.
\ee
where $\theta(x)=1+\|x\|^2$.
Denote by $f_k^\den$ the optimal value of \reff{rel2}.
It was conjectured in \cite{MLM21} that the hierarchy  of \reff{rel2}
has finite convergence under the optimality condition assumptions.
This conjecture was shown to hold in \cite{hny}
when $\ideal{\tilde{c}_{eq}}$ is real radical. We remark that when the degrees of
$f,c_{j}~(j \in \mathcal{I} )$ are all even,
the constraint $x_0\geq0$ is redundant to define the optimization
\reff{h3.5} and the SCC will fail
for the minimizers of \reff{h3.5} with $x_0=0$ (cf.~\cite{hny}).
In this case, we consider the alternative optimization
\be   \label{h3.5local}
\left\{ \baray{rl}
\min  &  F(\tilde{x}) \coloneqq \tilde{f}(\tilde{x}) - f_{min} \cdot x_0^d  \\
\st &   \tilde{c}_{j}(\tilde{x})=0~(j \in \mathcal{E}), \\
&   x_0^2 + \| x \|^2-1=0, \\
&  \tilde{c}_{j}(\tilde{x}) \geq 0~(j \in \mathcal{I}).\\
\earay \right.
\ee
In the following, we show that finite convergence also holds
even if $\ideal{\tilde{c}_{eq}}$ is not real radical.
This resolves the conjecture made in \cite[Section 4.2]{MLM21}
about the finite convergence of \reff{rel2}
under  optimality condition assumptions.

\begin{thm}  	\label{fini:deno}
Let $K$ be the feasible set of \reff{po4}.
\bit

\item [(i)] Suppose $K$ is closed at infinity, the degrees of
$f,c_{j}~(j \in \mathcal{I} )$ are all even, and
$f \in \mR+ int(\mathscr{P}_{d}(K)).$
If the LICQC, SCC and SOSC hold at every minimizer
of \reff{h3.5local}, then $f_k^\den= f_{min}$ for all $k$ big enough.

\item [(ii)]
Suppose $f^\hm$
 is a positive definite form.
If the LICQC, SCC and SOSC hold at every minimizer of \reff{h3.5}, then
$f_k^\den= f_{min}$ for all $k$ big enough.

\eit
\end{thm}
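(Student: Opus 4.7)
The plan is to deduce Theorem \ref{fini:deno} from Theorem \ref{finite:poin} (or a minor variant of its proof) together with the dehomogenization argument of \cite{hny}. The bound $f_k^\den \le f_{min}$ is immediate: any feasible $\gamma$ in \reff{rel2} gives $\theta^k(f-\gamma) \ge 0$ on $K$ and $\theta > 0$, hence $\gamma \le f_{min}$. So the substantive direction is $f_k^\den \ge f_{min}$, which reduces to producing a denominator certificate $\theta^k(f - f_{min}) \in \ideal{c_{eq}}_{2k} + \qmod{c_{in}}_{2k}$ at some finite order $k$; monotonicity of the hierarchy in $k$ (multiplying by the SOS polynomial $\theta$) then extends the conclusion to all larger $k$.

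The first stage is to obtain a homogenized SOS representation of $\tilde{f} - f_{min} x_0^d$. For part (ii), the positive definiteness of $f^\hm$ ensures that $\tilde{f} - f_{min} x_0^d \ge 0$ on $\widetilde{K}$ (at $x_0 = 0$ it equals $f^\hm > 0$; at $x_0 > 0$ it equals $x_0^d(f - f_{min})$) and that its zero set on $\widetilde{K}$ corresponds bijectively to the minimizers of \reff{h3.5}. The LICQC, SCC, SOSC therefore transfer to the homogenized problem, and the argument underlying Theorem \ref{finite:poin} (reducing through Theorem \ref{hom:fin} to Theorem \ref{finite:gpm} in the archimedean homogenized setup) produces some $k_0$ and an identity
\[
\tilde{f} - f_{min} x_0^d \, = \, \sigma_0 + \sum_{j \in \mathcal{E}} p_j \tilde{c}_j + q(\|\tilde{x}\|^2 - 1) + \sum_{j \in \mathcal{I}} \sigma_j \tilde{c}_j + \sigma\, x_0,
\]
with $\sigma_0, \sigma_j, \sigma$ SOS and $p_j, q \in \mathbb{R}[\tilde{x}]$, all of degree at most $2k_0$. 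For part (i), the even-degree hypothesis on $f$ and on $c_j$ $(j\in\mathcal{I})$ makes the constraint $x_0 \ge 0$ redundant in defining the feasible set, but retaining it would destroy SCC at minimizers on $\{x_0 = 0\}$. We therefore replace \reff{h3.5} with \reff{h3.5local} and drop $x_0$ from $\tilde{c}_{in}$ in \reff{sets:tld:ceqcin}. The proof of the analogous finite-convergence statement goes through unchanged, since it ultimately depends only on the archimedeanness of $\ideal{\tilde{c}_{eq}}+\qmod{\tilde{c}_{in}}$, which still holds via $\|\tilde{x}\|^2 - 1 \in \ideal{\tilde{c}_{eq}}$. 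The resulting identity is the same as above but without the final $\sigma x_0$ term.

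The second stage is the dehomogenization step, which converts the above homogenized identity into a certificate with denominator $\theta^k$. This is precisely the argument already carried out in \cite{hny} to establish finite convergence of \reff{rel2} under real radicalness; the crucial observation is that the dehomogenization step itself does not invoke real radicalness, it only needs a homogenized SOS identity of the above form as input. One substitutes $x_0 = 1$ and rewrites $\|\tilde{x}\|^2 - 1$ as $\|x\|^2 = \theta - 1$, multiplies by a sufficient power $\theta^k$, and absorbs the resulting $q(1,x)(\theta - 1)$ contribution into the $\ideal{c_{eq}} + \qmod{c_{in}}$ part by exploiting that $\theta$ is itself SOS and performing a careful degree accounting that trades the sphere-constraint factor against powers of $\theta$. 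The main obstacle is exactly this bookkeeping: controlling the power $k$ needed to clear the sphere-constraint term and the odd $x_0$-factor in terms of $k_0$ and $\deg f$, and ensuring everything fits inside the truncations $\ideal{c_{eq}}_{2k}$ and $\qmod{c_{in}}_{2k}$ at a uniform order. Once this is in place, $\theta^k(f - f_{min}) \in \ideal{c_{eq}}_{2k} + \qmod{c_{in}}_{2k}$ for all $k$ beyond the threshold, so $\gamma = f_{min}$ is feasible in \reff{rel2}, giving $f_k^\den \ge f_{min}$ and, combined with the trivial upper bound, $f_k^\den = f_{min}$.
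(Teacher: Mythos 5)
Your high-level outline---obtain a homogenized Positivstellensatz certificate for $\tilde{f}-\gamma x_0^d$ via the archimedean theory of Section~\ref{sc:finite}, then dehomogenize to a denominator certificate---matches the paper, and the trivial bound $f_k^\den\le f_{min}$ is right. But the dehomogenization step as you describe it does not work, and this is the crux of the argument. Substituting $x_0=1$ into an identity
\[
\tilde{f}-\gamma x_0^{d}=\sigma_0+\sum_{j\in\mathcal{E}}p_j\tilde{c}_j+q\,(\|\tilde{x}\|^2-1)+\sum_{j\in\mathcal{I}}\sigma_j\tilde{c}_j
\]
leaves a residual term $q(1,x)\|x\|^2$, and since $q\in\mathbb{R}[\tilde{x}]$ is an \emph{arbitrary} polynomial multiplier (not SOS), $q(1,x)\|x\|^2$ is in general not a member of $\ideal{c_{eq}}+\qmod{c_{in}}$; no multiplication by $\theta^k$ and no ``trading the sphere factor against powers of $\theta$'' fixes this. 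The paper instead substitutes $\tilde{x}\mapsto(1,x)/\sqrt{\theta}$, which puts $\tilde{x}$ on the unit sphere so that $\|\tilde{x}\|^2-1$ vanishes \emph{identically} and the $q$-term drops out. This substitution introduces odd powers of $\sqrt{\theta}$, which are killed by the symmetrization $\tilde{x}\mapsto-\tilde{x}$: adding the identity to its image under $\tilde{x}\mapsto-\tilde{x}$ cancels all odd-degree parts in $\sqrt{\theta}$, \emph{provided} $\tilde{f}-\gamma x_0^d$ and the inequality constraints $\tilde{c}_j$ $(j\in\mathcal{I})$ are even-degree forms and the SOS multipliers remain SOS under sign flip. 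This is exactly why the hypotheses in item~(i) demand even degrees and why the paper works with the relaxation \reff{hx0} built from $\tilde{c}_{ie}$ (without the $x_0\ge0$ constraint and without a $\sigma x_0$ term).

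Your treatment of item~(ii) has a related gap. If you invoke Theorem~\ref{finite:poin}/\ref{hom:fin} directly, the resulting identity contains a $\sigma\,x_0$ term from the constraint $x_0\ge0$; after the substitution $\tilde{x}\mapsto(1,x)/\sqrt{\theta}$ this contributes a factor $1/\sqrt{\theta}$ of odd degree that the symmetrization cannot remove, and the constraint tuple $\tilde{c}_{in}$ from \reff{sets:tld:ceqcin} may also contain odd-degree $\tilde{c}_j$. The paper instead reduces~(ii) to~(i): it replaces each inequality $\tilde{c}_j\ge 0$ by $x_0^{\theta_j}\tilde{c}_j\ge 0$ with $\theta_j=2\lceil\deg(c_j)/2\rceil-\deg(c_j)$ (making every inequality constraint even-degree), drops $x_0\ge0$ entirely, and forms the optimization \reff{opt:PV:eq}. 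Positive definiteness of $f^\hm$ then guarantees this auxiliary problem has optimal value $0$ with no minimizers on $\{x_0=0\}$, and the optimality conditions carry over (via Theorem~4.4 of \cite{hny}), so item~(i) applies. Your proposal omits both the constraint modification and the passage from $x_0=1$ to the normalized substitution, so the dehomogenization as written would fail.
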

\begin{proof}
(i) Let $d_f \coloneqq d/2$, $d_i \coloneqq \deg(c_j)/2$ $(j\in \mathcal{I})$.
Denote $\tilde{c}_{ie}  \coloneqq  \big\{ \tilde{c_j}(\tilde{x}) \big\}_{j \in \mc{I} } $.
For a degree $k\in \mathbb{N}$, consider the following alternative SOS relaxations
\begin{equation}  \label{hx0}
\left\{ \baray{rl}
\max &   \gamma \\
\st &  \tilde{f}(\tilde{x})- \gamma x_0^{d} \in
\ideal{\tilde{c}_{eq}}_{2k} +\qmod{\tilde{c}_{ie}}_{2k}.
\earay \right.
\end{equation}
 Similar to Theorem \ref{finite:poin}, we know that the hierarchy \reff{hx0} has finite convergence,
under the given assumptions of item (i).
This means that there exists an integer $k_0$ such that
for all $\gamma < f_{min}$, we have  that
\be \label{po:pos:finite}
\tilde{f}-\gamma x_0^{d}\in \ideal{\tilde{c}_{eq}}_{2k_0}+\qmod{\tilde{c}_{ie}}_{2k_0} .
\ee	
Equivalently, there exist $h_0,h_{j}\in \mR[\tilde{x}]_{2k_0}$,
$\sigma_0,~\sigma_{j}\in \Sigma[\tilde{x}]_{2k_0}$ such that
\[
\deg(h_jc_j)\leq 2k_0, \quad \deg(\sigma_jc_j)\leq 2k_0,
\]
\[
\tilde{f}-\gamma x_0^{d}=\sum\limits_{j\in \mathcal{E}}
h_{j}\tilde{c}_j + \sum\limits_{j\in \mathcal{I}}
\sigma_{j}\tilde{c}_j+\sigma_{0}+h_0\cdot (\|\tilde{x}\|^2-1).
\]
The above induces that
\be
\baray{ll}
2(\tilde{f}-\gamma x_0^{d})&=\sum\limits_{j\in \mathcal{E}}(
h_{j}\tilde{c}_j+
h_{j}(-\tilde{x})\tilde{c}_j(-\tilde{x})) + \sum\limits_{j\in \mathcal{I}}
(\sigma_{j}+\sigma_{j}(-\tilde{x}))\tilde{c}_j\\
&+\sigma_{0}+\sigma_{0}(-\tilde{x})+(h_0+h_0(-x))\cdot (\|\tilde{x}\|^2-1).\\
\earay
\ee
if we substitute $\tilde{x}$ for $\frac{(1,x)}{\sqrt{\theta}}$ in the above, then
\be  \nn
\baray{ll}
2\cdot \frac{f(x)-\gamma}{\theta^{d_f}}=& \sum\limits_{j\in \mathcal{E}}
\left[h_{j}\left(\frac{(1,x)}{\sqrt{\theta}}\right)+(-1)^{\deg(c_j)}h_{j}\left(\frac{(-1,-x)}{\sqrt{\theta}}\right)\right]\frac{c_j(x)}{\sqrt{\theta}^{\deg(c_j)}} \\
	&+ \sum\limits_{j\in \mathcal{I}}
	\left[\sigma_{j}\left(\frac{(1,x)}{\sqrt{\theta}}\right)+
\sigma_{j}\left(\frac{(-1,-x)}{\sqrt{\theta}}\right)\right]\frac{c_j(x)}{\theta^{d_j}}\\
	&+\left[\sigma_{0}\left(\frac{(1,x)}{\sqrt{\theta}}\right)+
\sigma_{0}\left(\frac{(-1,-x)}{\sqrt{\theta}}\right)\right].\\
	\earay
\ee
Note that the odd degree terms in $\sigma_{j}\left(\frac{(1,x)}{\sqrt{\theta}}\right)+
\sigma_{j}\left(\frac{(-1,-x)}{\sqrt{\theta}}\right)$ are cancelled.
The above implies that there exists $k_1\in \mathbb{N}$ such that
\[
\theta^{k}(f-\gamma) \in \ideal{c_{eq} }_{2k+2d_f} + \qmod{ c_{in} }_{2k+2d_f}
\]
for all $k\geq k_1$, and $k_1$ is independent of $\gamma$.	
Hence,  $f_k^\den= f_{min}$ for all $k\geq k_1$.

\smallskip \noindent
(ii) For each $j \in \mathcal{I} $, let
$ \theta_j  \coloneqq  2\lceil \frac{\deg(c_j)}{2}\rceil-\deg(c_j).$
Consider the optimization problem
\be \label{opt:PV:eq}
\left\{ \baray{rl}
\min &  \tilde{f}(\tilde{x})  - f_{min} \cdot (x_0)^d \\
\st &   \tilde{c}_{j}(\tilde{x})=0~(j \in \mathcal{E}),\\
& \|\tilde{x}\|^2-1=0,\\
& x_0^{\theta_j} \tilde{c}_{j}(\tilde{x}) \geq 0~(j \in \mathcal{I}).\\
\earay \right.
\ee
Note that the degree of $f$ must be even. Let
$\tilde{u}=(u_0,u)$ be a feasible point of \reff{opt:PV:eq}.
If $u_0=0$, then $\|u\|=1$ and
\[ \tilde{f}(\tilde{u}) - f_{min} \cdot (u_0)^d =f^\hm(u)>0, \]
since $f^\hm$ is positive definite.  If $u_0\neq 0$,
then $u/u_0$ is feasible for \reff{po4} and
\[
\tilde{f}( \tilde{u} )  - f_{min} \cdot (u_0)^d =
u_0^d \big( f(u/u_0) - f_{min} \big) \geq 0 .
\]
Hence, the optimal value of \reff{opt:PV:eq} is zero.
As for Theorem 4.4 of \cite{hny},  the LICQC, SCC and SOSC
hold at every minimizer of \reff{opt:PV:eq}.
The conclusion then follows from item (i).
\end{proof}

\bigskip  \noindent
{\bf Remark:}
Theorem~\ref{finite:poin} (Theorem~\ref{fini:deno}, respectively)
assumes that the LICQC,  SCC and SOSC hold at every minimizer of \reff{h3.5}
(the optimization \reff{h3.5local}, respectively).
The objective function depends on the minimum value $f_{\min}$.
Suppose $\tilde{x}^*=(x_0^*,x^*)$ is a minimizer of \reff{h3.5}.
If $x_0^*>0$, it was shown in \cite{hny} that  $x^*/x_0^*$ is a minimizer of \reff{po4},
and the LICQC, SCC, SOSC hold at  $\tilde{x}^*$ of \reff{h3.5}
if and only if they hold at $x^*/x_0^*$ of \reff{po4}.
For this case, these optimality conditions are independent of the minimum value $f_{\min}$.
If $x_0^*=0$, then the LICQC, SCC and SOSC at $\tilde{x}^*$  for \reff{h3.5}
depend on $f_{\min}$. The same comments hold for the optimization \reff{h3.5local}.
We refer to \cite{hny} for the relationship between their optimality conditions.

\bigskip

The following is an  example to illustrate the finite convergence of \reff{rel2}.

\begin{example}
We consider the polynomial optimization
\be  \label{poex2}
\left\{ \baray{rl}
\min & x_1^2\left(x_1-1\right)^2+(x_2-1)^2\left(x_2-2\right)^2+(x_3+1)^2x_3^2 \\
& \qquad \qquad +2 x_1 (x_2-1) (x_3+1)\left(x_1+x_2+x_3-2\right) \\
\st  &x_1^3-x_2^3-x_3^3-1=0,\\
&(x_1^4+1)(x_1-1)+(x_1^2-x_1)(x_1x_2^2-2x_2)=0.\\
\earay \right.
\ee
The optimal value $f_{min}=0$. One can verify that $x_2+x_3\in \ideal{V_{\mR}(\tilde{c}_{eq})}$
but $x_2+x_3\notin \ideal{\tilde{c}_{eq}}$. Suppose otherwise it were true,
then there exist $h_1,h_2,h_3\in \mR[\tilde{x}]$ such that
\be \nonumber
\baray{ll}
x_2+x_3&=h_1\cdot (x_1^3-x_2^3-x_3^3-x_0^3)+h_2\cdot (x_1^2+x_2^2+x_3^2+x_0^2-1)\\
&+h_3\cdot ((x_1^4+x_0^4)(x_1-x_0)+(x_1^2-x_1x_0)(x_1x_2^2-2x_2x_0^2)),
\earay
\ee
Substituting $(x_0,x_1)$ for $(\frac{1}{\sqrt{2}},\frac{1}{\sqrt{2}})$
in the above, we can get
\[
x_2+x_3 =  -h_1 (\frac{1}{\sqrt{2}},\frac{1}{\sqrt{2}},x_2,x_3)(x_2^3+x_3^3)+h_2
(\frac{1}{\sqrt{2}},\frac{1}{\sqrt{2}},x_2,x_3)  (x_2^2+x_3^2),
\]
which can never hold. So, $\ideal{\tilde{c}_{eq}}$ is not real radical.
The homogenized optimization problem \reff{h3.5} reads
\be  \label{poex2:dual}
\left\{ \baray{rl}
\min & x_1^2\left(x_1-x_0\right)^2+(x_2-x_0)^2\left(x_2-2x_0\right)^2+(x_3+x_0)^2x_3^2 \\
&+2 x_1 (x_2-x_0) (x_3+x_0)\left(x_1+x_2+x_3-2x_0\right) \\
\st  &x_1^3-x_2^3-x_3^3-x_0^3=0,\\
&(x_1^4+x_0^4)(x_1-x_0)+(x_1^2-x_0x_1)(x_1x_2^2-2x_2x_0^2)=0,\\
& x_0^2+x_1^2+x_2^2+x_3^2=1,\\
&x_0\geq0.\\
\earay \right.
\ee
Its unique minimizer is $(\frac{1}{2},\frac{1}{2},\frac{1}{2},-\frac{1}{2})$,
and we can verify that the LICQC, SCC, SOSC hold at it.
By Theorem \ref{fini:deno}, the hierarchy \reff{rel2} has finite convergence.
A numerical experiment by GloptiPoly~3  indicates that $f_3^\den=0$.
\end{example}

\section{The GMP for the super resolution}
\label{sc:su}

GMPs have broad applications. This section gives the application
on the super resolution problem for semialgebraic sets.
The super resolution problem aims to reconstruct high-dimensional sparse vectors from
the observation of a low-pass filter \cite{deca2,deca1,HDLJ}.
This problem can be formulated as the GMP:
\be  \label{su:gpm}
\left\{ \baray{rl}
\min & \langle 1, y\rangle + \langle 1, z\rangle  \\
\st & \langle a_i,y\rangle-\langle a_i,z\rangle   =  b_i \, \, ( 1 \le i \le m),\\
&   ~y, z \in \mathscr{R}_{d}(K) .
\earay \right.
\ee
In the above, $b=(b_1,\dots,b_m)$ is a given vector,
$K$ is the semialgebraic set as in
$\reff{feas:set}$ and $\{a_i\}_{i=1}^m\subseteq \mR[x]_d$
is a set of linearly independent polynomial functions on $K$.
The dual optimization of \reff{su:gpm} is 
\be  \label{su:gpm:dual}
\left\{ \baray{rl}
\max &  b_1\theta_1 + \cdots + b_m \theta_m  \\
\st &  1 - \sum\limits_{i=1}^m \theta_ia_i
\in \mathscr{P}_{d}(K),\\
&  1 + \sum\limits_{i=1}^m \theta_ia_i
\in \mathscr{P}_{d}(K),\\
&  \theta = (\theta_1, \ldots, \theta_m) \in \re^m.
\earay \right.
\ee
For a degree $k \geq  d$, the $k$th moment  relaxation for \reff{su:gpm} is
\be  \label{su:momre:gpm}
\left\{ \baray{cl}
\min &  \langle 1, v\rangle + \langle 1, w\rangle  \\
\st  &  \langle a_i,v\rangle-\langle a_i,w\rangle   =  b_i \, \, ( 1 \le i \le m), \\
& \mathscr{V}_{c_{j}}^{(2k)}[v] = 0,\, \mathscr{V}_{c_{j}}^{(2k)}[w] = 0~ (j\in \mathcal{E}), \\
&L_{c_{j}}^{(k)}[v] \succeq 0,\, L_{c_{j}}^{(k)}[w] \succeq 0~(j\in \mathcal{I}), \\
& M_k[v] \succeq 0, \, M_k[w] \succeq 0, \, v, w \in \mathbb{R}^{\mathbb{N}_{2 k}^{n}} .
\earay \right.
\ee
The dual optimization of \reff{su:momre:gpm} is  the
$k$th order SOS relaxation for \reff{su:gpm:dual}:
\be \label{su:sos:gpm}
\left\{ \baray{cl}
\max &  b_1 \theta_1 + \cdots + b_m \theta_m  \\
\st & 1 - \sum\limits_{i=1}^m \theta_ia_i
\in  \ideal{c_{eq}}_{2k}+\qmod{c_{in}}_{2k},\\
& 1 + \sum\limits_{i=1}^m \theta_ia_i
\in  \ideal{c_{eq}}_{2k}+\qmod{c_{in}}_{2k},\\
& \theta \in \mR^m.
\earay \right.
\end{equation}
Let   $\phi^*$, $\vartheta^*$ denote the optimal value of
\reff{su:gpm}, \reff{su:gpm:dual} respectively,
and let  $\phi_k$, $\vartheta_k$ denote the optimal value of
\reff{su:momre:gpm}, \reff{su:sos:gpm} respectively.
It was observed in \cite[Section 5.2.2]{HDLJ} that the Moment-SOS hierarchy of \reff{su:momre:gpm}--\reff{su:sos:gpm} often has finite convergence.
In the following, we study conditions for its finite convergence.

Suppose $\theta^* = (\theta_1^*, \ldots, \theta_m^*)$
is a maximizer of \reff{su:gpm:dual}.
Consider the optimization problems:
\be  \label{su1:stand:opt}
\left\{ \baray{rl}
\min & 1 - \sum\limits_{i=1}^m \theta_i^* a_i(x)  \\
\st &  	c_{j}(x)=0~(j \in \mathcal{E}), \\
&c_{j}(x) \geq 0~(j \in \mathcal{I}),
\earay \right.
\ee
and
\be  \label{su2:stand:opt}
\left\{ \baray{rl}
\min & 1+ \sum\limits_{i=1}^m \theta_i^* a_i(x)  \\
\st &  	c_{j}(x)=0~(j \in \mathcal{E}), \\
&c_{j}(x) \geq 0~(j \in \mathcal{I}).
\earay \right.
\ee
Note that the objective polynomials in 
\reff{su1:stand:opt} and \reff{su2:stand:opt}
are both nonnegative on the set $K$.

\begin{thm} \label{su:finite:gpm}
Suppose $\ideal{c_{eq}}+\qmod{c_{in}}$ is archimedean and
$\theta^* = (\theta_1^*, \ldots, \theta_m^*)$
is a maximizer of \reff{su:gpm:dual}.
If the LICQC, SCC and SOSC hold at every minimizer of
\reff{su1:stand:opt} and \reff{su2:stand:opt},
then we have:

\bit

\item [(i)]
The hierarchy of \reff{su:momre:gpm}--\reff{su:sos:gpm} has finite convergence, i.e.,
$\vartheta_k = \phi_k = \vartheta^* = \phi^*$ for all $k$ big enough.	

\item [(ii)]
Every minimizer of the moment relaxation \reff{su:momre:gpm}
must have a flat truncation, when $k$ is sufficiently large.

\eit
\end{thm}

\begin{proof}
(i) Note that for $\theta=(0,\dots,0)$, we have
\[
1-\sum\limits_{i=1}^m \theta_i a_i(x)=1+\sum\limits_{i=1}^m
\theta_i a_i(x)=1\in int(\mathscr{P}_d(K)).
\]
Thus, the Slater condition holds for the pair \reff{su:gpm}--\reff{su:gpm:dual}.
By following the same proof of Theorem \ref{finite:gpm} (i),
we can show that for all $\eps >0$,
there exists $k_0\in \mathbb{N}$, independent of $\epsilon$, such that 
\begin{eqnarray*}
1-  \sum\limits_{i=1}^m \theta_i^* a_i(x) + \eps & \in &
\ideal{c_{eq}}_{2k_0}+\qmod{c_{in}}_{2k_0},  \\
1+ \sum\limits_{i=1}^m \theta_i^* a_i(x) + \eps & \in &
\ideal{c_{eq}}_{2k_0}+\qmod{c_{in}}_{2k_0}.
\end{eqnarray*}
 Thus, we know
$\frac{1}{1+\epsilon} \cdot \theta^*$ is feasible for
\reff{su:sos:gpm} at the relaxation order $k_0$,
for all $\epsilon >0$. As $\epsilon \rightarrow 0$, we get
\be
b^{\mathrm{T}} ( \frac{1}{1+\epsilon}\cdot \theta^* )
= \frac{1}{1+\epsilon}  b^{\mathrm{T}}\theta^*
\rightarrow   b^{\mathrm{T}} \theta^* .
\ee
This implies that $\vartheta_{k_0} =  b^{\mathrm{T}} \theta^* = \vartheta^*$ and then
$\vartheta^*=\phi^*=\vartheta_{k_0} = \phi_{k_0} $.
Hence, the Moment-SOS hierarchy  of
\reff{su:momre:gpm}--\reff{su:sos:gpm} has finite convergence.

(ii) Suppose $(v^{(k)}, w^{(k)})$ is a minimizer of
\reff{su:momre:gpm} for the relaxation order $k$.
The $k$th order moment relaxations for
polynomial optimization problems
\reff{su1:stand:opt} and \reff{su2:stand:opt} are respectively
\be  \label{su1mom:opt:stan}
\left\{ \baray{cl}
\min &  \langle 1 - \sum\limits_{i=1}^m \theta_i^* a_i  , v \rangle  \\
\st  &  \langle 1,v\rangle  =  1,  \\
&\mathscr{V}_{c_{j}}^{(2k)}[v] =  0~ (j\in \mathcal{E}), \\
&L_{c_{j}}^{(k)}[v] \succeq 0~(j\in \mathcal{I}), \\
& M_k[v] \succeq 0, \, v \in \mathbb{R}^{\mathbb{N}_{2 k}^{n}},
\earay \right.
\ee	
and
\be  \label{su2mom:opt:stan}
\left\{ \baray{cl}
\min &  \langle 1 + \sum\limits_{i=1}^m \theta_i^* a_i  , w \rangle  \\
\st  &  \langle 1,w\rangle  =  1,  \\
&\mathscr{V}_{c_{j}}^{(2k)}[w] =  0~ (j\in \mathcal{E}), \\
&L_{c_{j}}^{(k)}[w] \succeq 0~(j\in \mathcal{I}), \\
& M_k[w] \succeq 0, \, w \in \mathbb{R}^{\mathbb{N}_{2 k}^{n}} .
\earay \right.
\ee	
It follows from (i) that for all $k\geq k_0$,
\[
\langle 1, v^{(k)}\rangle + \langle 1, w^{(k)}\rangle=
\sum\limits_{i=1}^m b_i \theta_i^* =
\sum\limits_{i=1}^m (\langle a_i,v^{(k)}\rangle-\langle a_i,w^{(k)}\rangle)\theta_i^*.
\]
Hence, we get
\be \label{sueq1}
\langle 1-\sum\limits_{i=1}^m \theta_i^*a_i, v^{(k)}\rangle +
\langle 1-\sum\limits_{i=1}^m \theta_i^*a_i, w^{(k)}\rangle=0.
\ee
In the item (i), we have shown that
\[
\Big\{ 1 - \sum\limits_{i=1}^m \theta_i^* a_i,\, 1 + \sum\limits_{i=1}^m \theta_i^* a_i
\Big\} \subseteq   cl(\ideal{c_{eq}}_{2k_0}+\qmod{c_{in}}_{2k_0}) .
\] 
Since $v^{(k)}, w^{(k)}$ are feasible for \reff{su1mom:opt:stan}, \reff{su2mom:opt:stan} respectively, we have
\[
\langle 1-\sum\limits_{i=1}^m \theta_i^*a_i, v^{(k)}\rangle\geq 0,\quad
\langle 1-\sum\limits_{i=1}^m \theta_i^*a_i, w^{(k)}\rangle \geq 0.
\]
Combining with \reff{sueq1},  we get
\[
\langle 1 - \sum\limits_{i=1}^m \theta_i^* a_i, v^{(k)}\rangle=0,\quad
\langle 1 + \sum\limits_{i=1}^m \theta_i^* a_i, w^{(k)}\rangle=0.
\]
We show that $v^{(k)}, w^{(k)}$ both have flat truncations,
in four cases.

\bigskip
\noindent
{\it Case I:} $\big( v^{(k)} \big)_0= \big( w^{(k)} \big)_0=0$.
Then, for all $|\alpha| \leq 2 k-2$, we have $(v^{( k)})_{\alpha}=0$,
$(w^{( k)})_{\alpha}=0$.
This can be implied by Lemma~5.7 of \cite{Lau09}.
Thus, the truncations $\left.v^{( k)}\right|_{2k-2}$
and $\left.w^{( k)}\right|_{2 k-2}$ are flat.

\medskip 
\noindent
{\it Case II:} $\big( v^{(k)} \big)_0=0$, $\big( w^{(k)} \big)_0 > 0$.
As for case I, the truncation $\left.v^{( k)}\right|_{2 k-2}$ is flat.
Since $\big( w^{(k)} \big)_0 > 0$ and $\langle 1 + \sum\limits_{i=1}^m \theta_i^* a_i, w^{(k)}\rangle=0$, the normalization $w^{( k)} /w^{(k)}_0$ 
is a minimizer of \reff{su2mom:opt:stan}  for all $k\geq k_0$.
It follows from Theorem \ref{poly:fin} that  $w^{(k)}$ 
has a flat truncation, when $k$ is sufficiently large.

\medskip 
\noindent
{\it Case III:} $\big( v^{(k)} \big)_0 > 0$, $\big( w^{(k)} \big)_0 = 0$. The proof is the same as for Case II.

\medskip 
\noindent
{\it Case IV:} $\big( v^{(k)} \big)_0 > 0$, $\big( w^{(k)} \big)_0 > 0$.
Then,  $v^{(k)}/ \big( v^{(k)} \big)_0$ is a minimizer of \reff{su1mom:opt:stan} 
and $w^{(k)}/ \big( w^{(k)} \big)_0$
is a minimizer of \reff{su2mom:opt:stan}  for all $k\geq k_0$.
By Theorem \ref{poly:fin}, we know that both $v^{(k)}$ and $w^{(k)}$ have flat truncations for  sufficiently large $k$.

\end{proof}

\section{Conclusions and discussions}
\label{sc:con}

In this paper, we prove the finite convergence of the  Moment-SOS hierarchy
for solving generalized moment problems
under the archimedeanness and optimality conditions,
but without real radicalness of the equality constraint ideal.
This improves the finite convergence theory in the earlier work.
When the constraint set $K$ is unbounded (the archimedeanness fails in this case),
we propose a homogenized Moment-SOS hierarchy
and prove similar finite convergence results.
The applications of these results in
polynomial optimization with unbounded sets
are also discussed.

There still exist many interesting problems for future work.
One of them is about finite convergence of the Moment-SOS hierarchy
for generic GMPs. To be more specific, will the finite convergence always hold
except a zero-measure set in the space of input polynomials?
We have shown the finite convergence under the archimedeanness
and optimality conditions for \reff{stand:opt}.
%
%
To the best of  authors' knowledge,
it is an open question whether or not the LICQC, SCC, SOSC hold
at every minimizer of \reff{stand:opt},
when the polynomials $f, a_i, b_i, c_j$ have generic coefficients.
Note that $\theta^*=(\theta_1^*,\dots,\theta_m^*)$ is an optimizer of \reff{gpm:dual}
and it is not generic but determined by $f, a_i, b_i, c_j$.
Therefore, we pose the following conjecture.

\begin{conj} \label{GMP:conj}
Let $d_0$ and $d_i$ be positive degrees, for $i \in [m] \cup \mc{E} \cup \mc{I}$.
Consider $f\in \mathbb{R}[x]_{d_0}$, $a_i\in \mathbb{R}[x]_{d_i}$ $(i\in [m])$,
$b \in \re^m$, $c_j\in \mathbb{R}[x]_{d_j}$ $(j \in \mc{E} \cup \mc{I})$.
Then  there exists a finite set of polynomials $\varphi_1, \ldots, \varphi_L$,
which are in the coefficients of polynomials $f$,
$a = (a_i)_{i \in [m]}$, $b$,  $c = (c_j)_{ j \in \mc{E} \cup \mc{I} }$
 such that if
\[
\varphi_k\left(f, a ,b, c \right)\neq 0, ~k=1\dots,L,
\]
then the Moment-SOS hierarchy \reff{momre:gpm}--\reff{sos:gpm} has finite convergence.
\end{conj}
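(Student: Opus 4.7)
The plan is to deduce the conjecture from Theorem~\ref{finite:gpm} by showing that each of its hypotheses holds on the complement of a proper algebraic subvariety of the input coefficient space. Specifically, for (a) the Slater condition for \reff{gpm:dual}, (b) the nonvanishing of a minimizer $y^\ast$ of \reff{gpm}, and (c) the LICQC, SCC and SOSC at every minimizer of \reff{stand:opt}, we aim to produce finitely many polynomials in the coefficients of $(f,a,b,c)$ whose nonvanishing suffices to ensure the relevant condition. Archimedeanness of $\ideal{c_{eq}}+\qmod{c_{in}}$ is treated as a hypothesis on $c$ (or is enforced by appending an auxiliary ball inequality $R-\|x\|^2\geq 0$); the remaining conditions are then shown to be generic in the archimedean regime. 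Together, all the polynomial conditions extracted below give the desired tuple $\varphi_1,\ldots,\varphi_L$.

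Conditions (a) and (b) reduce to relatively direct nondegeneracy statements. Failure of Slater for \reff{gpm:dual} means the affine map $\theta\mapsto f-\sum\theta_ia_i$ misses the interior of $\mathscr{P}_d(K)$; because $\mathscr{P}_d(K)$ is a full-dimensional closed convex cone, this failure is a positive-codimension semialgebraic condition on $(f,a,b)$ that can be certified polynomially (e.g.\ via a sum-of-squares decomposition of a perturbation of $f$). The nonvanishing of $y^\ast$ amounts to compatibility of the linear constraints with a nontrivial measure on $K$, and failure is again a codimension-one, determinantal condition on $(a,b,c)$.

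The heart of the argument is (c), and the main obstacle is precisely the coupling pointed out by the authors: $\theta^\ast$ is not a free parameter but is determined by the remaining data. My proposal is to form the parametric incidence variety $\mathcal{W}$ of tuples $\big((f,a,b,c);\theta,x,\lambda,\mu\big)$ that simultaneously satisfy the KKT system for \reff{gpm:dual} (with dual multipliers coming from a representing measure of $y^\ast$ supported on a zero set of $f_\theta$) and the KKT system \reff{1.1:KKT} for \reff{stand:opt} at a putative minimizer $x$ with multipliers $\lambda_j$. Failure of LICQC, SCC, or SOSC at some minimizer carves out a subvariety $\mathcal{W}_{\text{bad}}\subset\mathcal{W}$ cut out by minors of $\nabla c_J(x)$, by $\lambda_j+c_j(x)=0$, and by singularity of the reduced Hessian of the Lagrangian on the tangent space, respectively. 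Projecting $\mathcal{W}_{\text{bad}}$ to the coefficient space gives a semialgebraic set $B$; by the Tarski--Seidenberg theorem, $B$ is described by finitely many polynomial equalities and inequalities, and taking the Zariski closure yields polynomials $\varphi_k$ whose common zero set contains $B$. The strategy then is to verify $\dim B<\dim(\text{input space})$ by exhibiting, for each stratum, one input for which the stratum is empty (a transversality/Thom--Sard-in-parameters argument, in the spirit of the genericity results underlying \cite{nieopcd}).

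The hard part will be the projection step: even when each fiber of $\mathcal{W}_{\text{bad}}$ is a proper subvariety of the corresponding fiber of $\mathcal{W}$, one must rule out a pathological stratum along which the projection is surjective. Several technical subtleties need to be addressed: bounding the number of minimizers of \reff{stand:opt} uniformly in the input, so that ``for every minimizer'' becomes a finite conjunction (this follows generically from the SOSC via B\'ezout-type bounds on KKT points); handling nonuniqueness of $\theta^\ast$ by working on the image of the normal-cone map and showing the non-uniqueness locus is itself low-dimensional; and controlling generic behaviour on the boundary of archimedeanness. Once these are settled, combining the resulting polynomials from (a), (b), and (c) produces a finite list $\varphi_1,\ldots,\varphi_L$ with the required property.
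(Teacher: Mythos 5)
This statement is Conjecture~5.1, which the paper explicitly poses as an \emph{open problem} at the end of Section~\ref{sc:con}; there is no proof of it in the paper against which to compare your proposal. Your plan is a reasonable sketch of the standard strategy — form an incidence variety over the input-coefficient space, use Tarski--Seidenberg to project, and show the bad locus has positive codimension — but it is a plan, not a proof, and it leaves the central difficulty unresolved.

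The crux, which you correctly identify, is the projection step in part (c): one must show that the locus in coefficient space over which \emph{some} minimizer of \reff{stand:opt} (for the induced $\theta^*$) violates LICQC, SCC, or SOSC is Zariski-thin. You outline a Thom--Sard-in-parameters argument, but the $\theta^*$-coupling you flag is precisely what breaks the direct analogue of the genericity argument in \cite{nieopcd}: there, one may perturb the objective freely, whereas here the objective $f_{\theta^*}$ moves along a constrained (and possibly set-valued) normal-cone map as $(f,a,b,c)$ varies, and it is not clear that this map is a submersion onto a full-dimensional set of objectives. Exhibiting a single input where the bad stratum is empty does not, by itself, bound the dimension of the projection unless you also have equidimensionality or properness of the fibers, which you have not established. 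Two further points need repair: (1) your claim in (a) that failure of Slater is ``a positive-codimension semialgebraic condition'' is not automatic — the affine slice $\{f-\sum\theta_ia_i\}$ has dimension only $m$ inside $\re[x]_d$, and whether a generic such slice hits $\mathrm{int}(\mathscr{P}_d(K))$ depends on the relative position of $f$ to the cone, so this needs an actual argument (for instance, whether generic $f$ can even be shifted into $\mathscr{P}_d(K)$ at all); and (2) the hierarchy \reff{sos:gpm}--\reff{momre:gpm} does not converge at all when $\ideal{c_{eq}}+\qmod{c_{in}}$ fails to be archimedean, so the conjecture implicitly requires either archimedeanness or passage to the homogenized hierarchy — appending a ball constraint changes the data $c$ and hence the genericity statement, so this must be stated precisely rather than waved off. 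In short, your proposal is directionally sensible and matches how one would expect to attack the problem, but the dimension bound at its heart is exactly what remains open, and the paper does not claim to know how to close it.
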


\bigskip 
\noindent
{\bf Acknowledgements.}
The authors would like to thank the editors and anonymous referees
for their fruitful suggestions on the paper.

\end{document}